\newtheorem{theorem}{Theorem}[section]
\newtheorem{prop}[theorem]{Proposition}
\newtheorem{thm}[theorem]{Theorem}
\newtheorem{lem}[theorem]{Lemma}
\newtheorem{cor}[theorem]{Corollary}
\newtheorem{conj}[theorem]{Conjecture}
\newtheorem{thm-defn}[theorem]{Theorem-Definition}
\theoremstyle{definition}
\newtheorem{rem}[theorem]{Remark}
\newtheorem{expl}[theorem]{Example}
\newtheorem{defn}[theorem]{Definition}
\newtheorem{defn-prop}[theorem]{Definition-Proposition}
\newtheorem*{claim}{Claim}
\newcommand{\bP}{\mathbb{P}}
\newcommand{\bC}{\mathbb{C}}
\newcommand{\bZ}{\mathbb{Z}}
\newcommand{\Aut}{\mathrm{Aut}}
\newcommand{\vol}{\mathrm{vol}}
\newcommand{\Vol}{\mathrm{Vol}}
\newcommand{\cO}{\mathcal{O}}
\newcommand{\bw}{\mathbf{w}}
\newcommand{\ol}{\overline}
\newcommand{\bR}{\mathbb{R}}
\newcommand{\hvol}{\widehat{\mathrm{vol}}}
\newcommand{\cM}{\mathcal M}
\newcommand{\fm}{\mathfrak{m}}
\newcommand{\dps}{\displaystyle}
\newcommand{\wt}{\mathrm{wt}}
\newcommand{\Ric}{\mathrm{Ric}}
\newcommand{\bfa}{\mathbf{a}}
\newcommand{\bfx}{\mathbf{x}}
\title{Infinitely many families of Sasaki-Einstein metrics on spheres}
\author{Yuchen Liu}
\address{Department of Mathematics, Northwestern University, Evanston, IL 60208, USA}
\email{yuchenl@northwestern.edu}
\author{Taro Sano}
\address{Department of Mathematics, Graduate School of Science, Kobe university, 1-1, Rokkodai, Nada-ku, Kobe 657-8501, Japan} 
\email{tarosano@math.kobe-u.ac.jp} 
\author{Luca Tasin}
\address{Dipartimento di Matematica F.\ Enriques, Universit\`a degli Studi di Milano, Via Cesare Saldini 50, 20133 Milano, Italy} 
\email{luca.tasin@unimi.it}
\begin{document}

\maketitle

\begin{abstract}
We show that there exist infinitely many families of Sasaki-Einstein metrics on every odd-dimensional standard sphere of dimension at least $5$. We also show that the same result is true for all odd-dimensional exotic spheres that bound parallelizable manifolds.
\end{abstract}

\section{Introduction}

An odd-dimensional compact Riemannian manifold $(M,g)$ is called Sasakian if its (punctured) metric cone $(M \times \bR^{>0}, \bar{g})$ is K\"ahler, where $\bar{g}=r^2 g + dr^2$ is the conical metric on $M \times \bR^{>0}$ with $r$ the coordinate of $\bR^{>0}$. Denote by $X:=M\times \bR^{\geq 0}/M\times\{0\}$ the cone over $M$. Moreover, a Sasakian manifold $(M,g)$ is called Sasaki-Einstein if $g$ has constant Ricci curvature, or equivalently, if $\bar{g}$ is a Ricci-flat K\"ahler cone metric on $X$. The Reeb vector field $\xi$ on a Sasakian manifold $(M,g)$ is given by $\xi:=J(r\partial_r)$ where $J$ is the integrable complex structure on $X$. 

It is an important problem in differential geometry to find Sasaki-Einstein metrics on odd dimensional manifolds. As a starting  case,
the case of spheres and exotic spheres attracted much attention. In \cite{BGK05}, Boyer, Galicki, and Koll\'ar constructed a large (finite) number of inequivalent families of Sasaki-Einstein metrics on all homotopy spheres in dimension $4n+1$, or in dimension $7$. Using computer programming, \cite{BGKT05} generalizes this result to dimensions $11$ and $15$ for homotopy spheres that bound parallelizable manifolds. In \cite{CS19}, Collins and Sz\'ekelyhidi found infinitely many families of inequivalent Sasaki-Einstein metrics on $S^5$. In \cite{PJ}, Park and Won completely determined which simply connected rational homology 5-spheres admit Sasaki-Einstein metrics.

In this paper, we show the existence of infinitely many families of Sasaki-Einstein metrics on a large class of homotopy spheres in all (odd) dimensions. Our main result is the following. 

\begin{thm}\label{thm:main}
Let $n\geq 3$ be an integer. Then any $(2n-1)$-dimensional homotopy sphere which bounds parallelizable manifolds admits infinitely many inequivalent families of Sasaki-Einstein metrics.
\end{thm}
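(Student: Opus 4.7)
The plan is to realize each target homotopy sphere as the link of an isolated weighted-homogeneous hypersurface singularity in $\mathbb{C}^{n+1}$---the most natural candidates being Brieskorn--Pham singularities $X_{\mathbf{a}}=\{z_0^{a_0}+z_1^{a_1}+\cdots+z_n^{a_n}=0\}$---whose link carries a canonical Sasakian structure with Reeb vector field determined by the weights $w_i=\mathrm{lcm}(\mathbf{a})/a_i$. By the K-stability criterion of Collins--Sz\'ekelyhidi, together with Li--Xu's reduction to equivariant K-stability, such a link admits a Sasaki--Einstein metric compatible with some Reeb vector field in the Sasaki cone as soon as the affine cone is K-stable as a log Fano cone singularity. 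The problem therefore splits into two essentially independent tasks: topologically, produce, for each prescribed homotopy sphere, infinitely many exponent tuples $\mathbf{a}$ whose link is diffeomorphic to it; and analytically, guarantee K-stability of the resulting cone for infinitely many of these tuples.

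For the topological part, I would use Brieskorn's classical formulas expressing the diffeomorphism class of the link in terms of arithmetic data of $\mathbf{a}$ (the Milnor signature modulo $|bP_{2n}|$ when $n$ is even, and the Kervaire invariant modulo $2$ when $n$ is odd). Starting from the finite realizations tabulated in \cite{BGK05} and \cite{BGKT05}, the aim is to construct, for each class, an infinite sequence $\mathbf{a}^{(k)}$ obtained by varying one or two exponents along a suitable arithmetic progression (e.g.\ replacing a single $a_i$ by $a_i+Nk$ for a well-chosen modulus $N$) in such a way that the relevant Brieskorn invariant stays in the correct residue class while the Fano condition $\sum w_i>\mathrm{lcm}(\mathbf{a})$ is preserved. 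This is a combinatorial/number-theoretic exercise rather than a conceptual difficulty.

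The main obstacle is K-stability of the entire infinite family of cones: unlike the earlier works, in which each singularity is verified separately by a finite computer search, one now needs a uniform argument as $k\to\infty$. My plan is to bound the stability threshold $\delta$ of the log Fano cone (or, after fixing a quasi-regular Reeb vector field, of the quotient Fano orbifold) from below, using the large $T$-equivariant structure of Brieskorn--Pham hypersurfaces. Equivariant K-stability reduces the test to monomial valuations, whose log discrepancies and $S$-invariants are explicit rational functions of $(\mathbf{a},\mathbf{w})$; for sequences in which one $a_i$ tends to infinity while the others remain fixed, these estimates are controlled by the fixed exponents alone, yielding a uniform lower bound $\delta\geq 1$ (after possibly moving the Reeb vector field within the Sasaki cone as in \cite{CS19}). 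Finally, to upgrade ``infinitely many cones admitting SE metrics'' to ``infinitely many \emph{inequivalent} Sasaki--Einstein structures'', I would appeal to the normalized volume invariant of the cone, which for Brieskorn--Pham singularities is an explicit rational function of $(\mathbf{a},\mathbf{w})$ and takes infinitely many distinct values along any unbounded sequence of exponents; since the Sasaki--Einstein volume is a differential-geometric invariant of the structure, distinct volumes force inequivalence.
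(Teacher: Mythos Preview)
Your overall architecture matches the paper's: Brieskorn--Pham links, K-stability of the cone, Brieskorn's diffeomorphism classification. But two of your three steps contain real gaps.

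\textbf{K-stability.} Your plan is to let one exponent tend to infinity while the others stay fixed and extract a uniform $\delta\geq 1$ bound from ``monomial valuations''. Brieskorn--Pham hypersurfaces are not toric, so equivariant K-stability does not reduce to monomial valuations in the way you suggest. More to the point, the paper proves (Theorem~\ref{thm:k-stab}) that $(Y(\mathbf a),\xi)$ is K-polystable if and only if
\[
1<\sum_{i=0}^{n}\frac{1}{a_i}<1+\frac{n}{a_n},
\]
a two-sided constraint. If $a_0,\dots,a_{n-1}$ are frozen and $a_n\to\infty$, the right-hand side collapses to $1$ and the window closes unless $\sum_{i<n}1/a_i=1$ exactly; if instead some $a_j$ with $j<n$ goes to infinity, the left inequality eventually fails. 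The paper therefore lets \emph{several} exponents scale together---for even $n$ one takes $(2,2,p,\dots,p,p+1,p+l)$ or $(n-1,\dots,n-1,p,q)$ with $p,q$ coupled---and verifies the sharp numerical criterion by hand. No asymptotic $\delta$-estimate and no motion of the Reeb vector are used; the proof of Theorem~\ref{thm:k-stab} instead passes via a Galois cover to a hyperplane arrangement in $\bP^{n-1}$ and invokes Fujita's criterion there.

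\textbf{The signature.} Calling the control of the Brieskorn invariant ``a combinatorial/number-theoretic exercise rather than a conceptual difficulty'' is where you most seriously underestimate the problem. For odd $n$ the Arf invariant is easy to pin down, and the paper dispatches that case in a paragraph (Section~\ref{ss:odd}). For even $n$, the class in $bP_{4m}$ is $\tau(\mathbf a)/8$, with $\tau$ a signed count of lattice points in a region depending on \emph{all} the $a_i$. Because several exponents must move together (previous paragraph), this region is not a dilate of a fixed polytope and Ehrhart theory does not apply off the shelf. The technical heart of the paper is Proposition~\ref{prop:tauqpoly}: for $\mathbf a_p=(2,2,p,\dots,p,p+1,p+l)$ the function $p\mapsto\tau(\mathbf a_p)$ is a quasi-polynomial of period $l(l-1)$. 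The proof occupies most of Section~\ref{s:infinite} and rests on explicit lattice-point identities (Lemmas~\ref{lem:triangle} and \ref{lem:lower}) together with a case split (Lemma~\ref{lem:min^}). Only after this does one know that the residue of $\tau(\mathbf a_p)$ is constant along a suitable arithmetic progression in $p$, so that Brieskorn's classical value at $p=2$ propagates to infinitely many $p$ and hits every class in $bP_{4m}$.

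\textbf{Inequivalence.} Your normalized-volume argument is plausible, but the paper takes a different route: it arranges $m\nmid I_{\mathbf a}$ (for even $n$), so the orbifold quotient carries no holomorphic contact structure (Proposition~\ref{prop:noofdcontact}), and then \cite[Corollary~22]{BGK05} forces the metrics to be pairwise non-isometric.
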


As a result, Theorem \ref{thm:main} implies the following two conjectures of Collins-Sz\'ekelyhidi and Boyer-Galicki-Koll\'ar.

\begin{cor}[{\cite[Conjecture 9.2]{CS19}}]\label{conj:CS}
There are infinitely many families of Sasaki-Einstein metrics
on every odd-dimensional standard sphere of dimension at least $5$.
\end{cor}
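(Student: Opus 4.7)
The plan is to deduce this corollary directly from Theorem \ref{thm:main} by specializing the homotopy sphere to be the standard one. An odd-dimensional standard sphere of dimension at least $5$ can be written as $S^{2n-1}$ for some integer $n \geq 3$, which matches the dimension hypothesis of Theorem \ref{thm:main}. So the entire task reduces to verifying that $S^{2n-1}$ is a $(2n-1)$-dimensional homotopy sphere which bounds a parallelizable manifold.

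Both conditions are transparent. The standard sphere is tautologically a homotopy sphere in its dimension. Moreover, $S^{2n-1}$ is the boundary of the closed unit disk $D^{2n} \subset \bR^{2n}$, which is contractible; since every real vector bundle over a contractible base is trivial, the tangent bundle $TD^{2n}$ is trivial, and hence $D^{2n}$ is parallelizable. Applying Theorem \ref{thm:main} to $S^{2n-1}$ now produces infinitely many inequivalent families of Sasaki-Einstein metrics, which is exactly the statement of the corollary.

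There is no genuine obstacle here beyond invoking the main theorem: the construction of the metrics is packaged into Theorem \ref{thm:main}, and checking that the standard sphere meets its hypotheses is immediate differential topology. The interest of the corollary lies not in the argument but in its consequence, namely the resolution of Conjecture 9.2 of Collins--Sz\'ekelyhidi, extending the existence of infinitely many Sasaki-Einstein families from $S^5$ (established in \cite{CS19}) to all odd standard spheres of dimension at least $5$.
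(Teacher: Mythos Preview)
Your proposal is correct and matches the paper's approach exactly: the paper simply states that Theorem \ref{thm:main} implies this corollary, and you have supplied the one-line verification that $S^{2n-1}$ bounds the parallelizable disk $D^{2n}$ so that the hypothesis of Theorem \ref{thm:main} is met.
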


\begin{cor}[{\cite[Conjecture 4]{BGK05}}]\label{conj:BGK}
All odd-dimensional homotopy spheres that bound parallelizable manifolds admit Sasaki-Einstein metrics.
\end{cor}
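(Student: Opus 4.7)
The plan is to derive Corollary \ref{conj:BGK} directly from Theorem \ref{thm:main}, with the low-dimensional cases handled by classical means. Since Theorem \ref{thm:main} applies to all $(2n-1)$-dimensional homotopy spheres with $n \geq 3$ bounding parallelizable manifolds and produces \emph{infinitely many} inequivalent families of Sasaki-Einstein metrics, the existence of at least one such metric is immediate in every odd dimension $\geq 5$.

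For the remaining low dimensions, the argument is classical. In dimension $3$, the resolution of the Poincar\'e conjecture identifies every homotopy $3$-sphere with $S^3$; the standard round metric on $S^3$ is Sasaki-Einstein, with metric cone the flat K\"ahler cone $(\mathbb{C}^2\setminus\{0\},\bar g)$ and Reeb vector field generating the Hopf fibration. In dimension $1$, the only homotopy sphere is $S^1$, whose standard round metric is Sasakian with metric cone $\mathbb{C}\setminus\{0\}$ and is trivially Einstein since the Ricci tensor of a $1$-manifold vanishes identically.

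No genuine obstacle arises in the corollary itself: the entire analytic and geometric difficulty is packaged into Theorem \ref{thm:main}. The only point requiring any attention is the verification that the low-dimensional homotopy spheres falling outside the range $n \geq 3$ are covered by these classical Sasaki-Einstein models, which is immediate.
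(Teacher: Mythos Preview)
Your proposal is correct and follows the same approach as the paper: the corollary is stated immediately after Theorem \ref{thm:main} with the remark that it is implied by that theorem, and no separate proof is given. Your addition of the classical cases in dimensions $1$ and $3$ is a harmless elaboration the paper leaves implicit.
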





\medskip

The strategy to prove Theorem \ref{thm:main} is based on a method developed in \cite{BG01,BGK05, BGKT05, CS19}, that we now outline (see the book \cite{book} for a detailed exposition on Sasakian geometry). 

Consider the link $L(\bfa):=Y(\bfa)\cap S^{2n+1}(1)$ of a Brieskorn-Pham singularity 
\[
Y(\bfa):= (z_0^{a_0} + z_1^{a_1}+\cdots + z_n^{a_n}=0)\subset \bC^{n+1}
\]
where  $n\geq 3$ and $\bfa = (a_0, \dots, a_n)\in \bZ_{>1}^{n+1}$. Then $L(\bfa)$ is a smooth $(2n-1)$-manifold which bounds a parallelizable manifold.
Under certain numerical conditions on $\bfa$, the link $L(\bfa)$ is a homotopy $(2n-1)$-sphere and so it is an element of the group $bP_{2n}$ 
of equivalence classes of oriented homotopy $(2n-1)$-spheres that bound parallelizable manifolds. See Section \ref{s:BP}.

Let $d:=\mathrm{lcm}(a_0,\cdots, a_n)$ and $d_i:= \frac{d}{a_i}$ for 
$i=0,\ldots ,n$.  
Let $\xi:= \sum_{i=0}^n d_i z_i \partial_{z_i}$ be a Reeb vector field on $Y(\bfa)$ of weight $(d_0, \ldots, d_n)$ which generates the corresponding $\bC^*$-action on $Y(\bfa)$.  
The following result 
shows that the Lichnerowicz obstruction \cite[(3.23)]{MR2318866} is not only necessary but also sufficient for the existence of Sasaki-Einstein metrics on $L(\bfa)$.
\begin{thm}\label{thm:k-stab}
Notation as above. Assume that $a_0\leq a_1\leq  \cdots \leq a_n$.
Then  $(Y(\bfa),\xi)$ is a K-polystable (resp. K-semistable) Fano cone singularity if and only if 
\begin{equation}\label{eq:maininequality}
 1<\sum_{i=0}^n \frac{1}{a_i}< ~(\textrm{resp. }\leq) ~1+ \frac{n}{a_n}.
\end{equation}
Moreover, the link $L(\bfa)$ admits a Sasaki-Einstein metric if the above strict inequalities hold.
\end{thm}
We note that \cite[Theorem 1]{MR2341843} studied the case where $a_0, \ldots ,a_n$ are pairwise coprime 
and \cite[Proposition 6.1]{ST21} studied the case where the corresponding weighted hypersurface is well-formed. 


The diffeomorphism type of $L(\bfa)$ is determined by the Arf invariant if $n$ is odd and by the signature $\tau (\bfa)$ of the Milnor fiber if $n$ is even.
In Section \ref{s:infinite}, we show that for appropriate choices of $\bfa$ we get each possible value in $bP_{2n}$ infinitely many times, proving in this way Theorem \ref{thm:main}. The hard case is when $n$ is even: the problem of computing $\tau (\bfa)$ is reduced to showing that a certain function counting integral points of a polytope is a quasi-polynomial.  We then use this to generalize a classical example of Brieskorn. 

In Section \ref{s:moduli} we look at the moduli space of Sasaki-Einstein metrics proving the following.

\begin{cor}\label{cor:unbdddim}
	For $m \ge 3$, let $ \Sigma^{4m-1} \in bP_{4m}$ be a homotopy sphere. 
	Then the dimensions of the families of Sasaki-Einstein metrics on $\Sigma^{4m-1}$ and $S^{4m-3}$ are both unbounded.
\end{cor}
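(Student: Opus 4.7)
\medskip
\noindent\textbf{Proof proposal.}

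The plan is to combine the infinitude statement of Theorem~\ref{thm:main} with the deformation theory of weighted Brieskorn--Pham hypersurfaces. Fix $\Sigma^{4m-1}\in bP_{4m}$ with $m\ge 3$, so $n=2m\ge 6$. By Theorem~\ref{thm:main} (via the construction of Section~\ref{s:infinite}), there is an infinite sequence of pairwise distinct tuples $\mathbf{a}^{(k)}\in\bZ_{>1}^{n+1}$ satisfying the strict Lichnerowicz inequalities of Theorem~\ref{thm:k-stab} and with $L(\mathbf{a}^{(k)})\cong\Sigma$. For each $\mathbf{a}=\mathbf{a}^{(k)}$, set $d=\mathrm{lcm}(\mathbf{a})$, $d_j=d/a_j$, and let $V_d\subset\bC[z_0,\ldots,z_n]$ be the space of weighted homogeneous polynomials of degree $d$ in the weighted variables $(d_0,\ldots,d_n)$; its dimension $N_d(\mathbf{a})$ equals the number of lattice points $(b_0,\ldots,b_n)\in \bZ_{\geq 0}^{n+1}$ with $\sum_{i=0}^n b_i/a_i=1$.

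By openness of K-polystability of Fano cone singularities under $T$-equivariant deformations with fixed Reeb (as developed in the K-moduli theory of Li--Xu and Blum--Liu--Xu), a non-empty Zariski open neighbourhood $U\subset V_d$ of the Fermat polynomial $F_{\mathbf{a}}=\sum_i z_i^{a_i}$ parametrises polynomials whose zero loci are K-polystable Fano cones, each carrying a Sasaki--Einstein metric on its link by Theorem~\ref{thm:k-stab}. Small deformations inside $V_d$ preserve quasi-smoothness, so all these metrics live on the same smooth manifold $L(\mathbf{a})\cong\Sigma$. Dividing by the $\xi$-preserving holomorphic automorphism group $G(\mathbf{a})\subset\Aut(\bA^{n+1},\xi)$ -- whose Lie algebra consists of polynomial derivations of weighted degree zero, hence has dimension $\sum_{j=0}^n N_{d_j}(\mathbf{a})$ -- produces a continuous family of pairwise inequivalent Sasaki--Einstein structures on $\Sigma$ of dimension at least $N_d(\mathbf{a})-\sum_j N_{d_j}(\mathbf{a})$.

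It therefore suffices to find a subsequence along which $N_d(\mathbf{a}^{(k)})-\sum_j N_{d_j}(\mathbf{a}^{(k)})\to\infty$. The main obstacle is that the K-stability constraint $\sum_i 1/a_i>1$ forces at least one coordinate of $\mathbf{a}$ to be small, and a direct calculation (e.g.\ $\mathbf{a}=(2,2,\ldots,2,a_n)$ gives bounded $N_d$) shows that merely having some $a_i^{(k)}\to\infty$ is not sufficient. We expect this to be handled by inflating several coordinates of $\mathbf{a}^{(k)}$ simultaneously while preserving the signature invariant $\tau$; the quasi-polynomial flexibility of the lattice-point count exploited in Section~\ref{s:infinite} should produce sequences along which an Ehrhart-type estimate on the simplex $\{x\in \bR_{\geq 0}^{n+1}:\ \sum x_i/a_i=1\}$ forces $N_d(\mathbf{a}^{(k)})\to\infty$ with the automorphism term remaining sub-leading. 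A secondary point is to invoke the openness of K-polystability for Fano cone singularities with fixed Reeb, now standard in the K-moduli literature.
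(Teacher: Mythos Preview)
Your overall strategy---deform $Y(\bfa)$ within the space $V_d$ of weight-$d$ polynomials, mod out by the $\xi$-equivariant automorphism group, and estimate $N_d(\bfa)-\sum_j N_{d_j}(\bfa)$---is exactly the framework the paper uses (cf.\ \cite[(15.2)]{BGK05}). But the argument as written has a genuine gap at the K-stability step.

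You invoke ``openness of K-polystability'' to pass from the Fermat polynomial $F_{\bfa}$ to a Zariski-open set of nearby polynomials. No such openness holds. What is Zariski open is K-\emph{semistability} and \emph{uniform} K-stability; K-polystability is only constructible. The distinction matters here because the central fibre $Y(\bfa_p)$ with $\bfa_p=(2,2,p,\ldots,p,p+1,p+l)$ has a positive-dimensional automorphism group commuting with $\xi$: the copy of $SO(2,\bC)$ rotating $(z_0,z_1)$ preserves $z_0^2+z_1^2$ and acts nontrivially modulo $\langle\xi\rangle$. Hence $(Y(\bfa_p),\xi)$ is strictly K-polystable, not K-stable, and openness of uniform K-stability says nothing about its neighbours. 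Openness of K-semistability gives only that nearby cones are K-semistable, which does not yield Sasaki--Einstein metrics.

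The paper gets around this in Proposition~\ref{prop:perturbSE} by first passing to the quotient log pair and then taking a further Galois cover $\phi_p\colon X\to X_p$ to a pair $(X_p,\Delta_p)$ which is shown to have \emph{finite} automorphism group (Claim~(i)). Once the automorphism group is finite, K-polystable $=$ K-stable $=$ uniformly K-stable, and only then is openness of uniform K-stability invoked to handle the general $F_p$. This reduction to a finite-automorphism situation is the missing idea in your proposal.

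A secondary point: you leave the unboundedness of $N_d-\sum_j N_{d_j}$ as an expectation (``should produce sequences\ldots''). The paper simply computes it for $\bfa_p$: the relevant quantity is $\binom{p+n-4}{n-4}-(n-3)^2-1$, coming from the freedom in the degree-$p$ form $F_p(z_2,\ldots,z_{n-2})$, and this visibly tends to infinity with $p$ once $n\ge 5$. No Ehrhart asymptotics are needed---the specific sequence $\bfa_p$ already does the job.
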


This gives a negative answer to a question in \cite[Section 6]{MR2237105}. 

In Subsection \ref{ss:Euler}, we show that the contact structures induced by our construction on each homotopy sphere $\Sigma^{4m-1}$ ($m \ge 3$) belong to infinitely many families.

\subsection*{Acknowledgements}
We would like to thank Charles Boyer, J\'anos Koll\'ar, Yoshihiko Matsumoto, Yuji Odaka, Andrea Petracci, Chenyang Xu, Zhouli Xu, and Ruixiang Zhang for helpful discussions and comments. 
We would like to thank the referees for helpful comments. 

The first named author is partially supported by NSF Grant DMS-2148266 (formerly DMS-2001317). 
The second named author is partially supported by JSPS KAKENHI Grant Numbers JP17H06127, JP19K14509. 
The third named author is a member of the GNSAGA group of INdAM and is supported by the project PRIN 2020KKWT53.





\section{Brieskorn-Pham singularities}\label{s:BP}


\subsection{Topological background on Brieskorn manifolds}\label{ss:topback}
We fix some notation. Let $n\geq 3$ be an integer and let $\bfa = (a_0, \dots, a_n)\in \bZ_{>0}^{n+1}$ such that each $a_i>1$. Let 
\[
Y(\bfa):= (z_0^{a_0} + z_1^{a_1}+\cdots + z_n^{a_n}=0)\subset \bC^{n+1}
\]
and its link $L(\bfa):= Y(\bfa)\cap S^{2n+1}(1)$ which is called a {\it Brieskorn manifold}. 
We know that $L(\bfa)$ is a smooth $(2n-1)$-manifold \cite[Corollary 2.9]{MR0239612}. 
Let $M (\bfa)$ be the Milnor fiber of the polynomial $f(z) = z_0^{a_0} + z_1^{a_1}+\cdots + z_n^{a_n}$, 
that is, the fiber of the locally trivial fibration $\varphi \colon S^{2n+1}(1) \setminus L(\bfa) \to S^1(1)$ defined by 
$\varphi(z) = \frac{f(z)}{|f(z)|}$.  
Since the Milnor fiber is pararellizable (\cite[Theorem 5.1]{MR0239612}, \cite[(1.23) Proposition]{MR1194180}), we know that $L(\bfa)$ always bounds a parallelizable manifold. 

\begin{defn}
To each $\bfa$ as above, one associates a graph $G(\bfa)$ with $n+1$ vertices labeled by $a_0, \dots, a_n$. Two vertices $a_i$ and $a_j$ are connected in $G(\bfa)$ if and only if $\gcd(a_i, a_j)>1$. Let $G(\bfa)_{\rm ev}$ be the connected component of $G(\bfa)$ that contains all even integers. Note that $G(\bfa)_{\rm ev}$ may contain odd integers as well. 
\end{defn}

The following theorem gives a characterization of when $L(\bfa)$ is a homotopy $(2n-1)$-sphere, that is, a smooth $(2n-1)$-manifold homotopy equivalent to the $(2n-1)$-sphere. (By the topological generalized Poincar\'{e} conjecture, we can replace ``homotopy equivalent'' in the above with ``homeomorphic''.) 

\begin{thm} \cite[Satz 1 (ii)]{Bri66}\label{thm:brieskorn}
Let $n \ge 3$, $\bfa$, $L(\bfa)$ and $G(\bfa)$ be as above. The link $L(\bfa)$ is homeomorphic to the $(2n-1)$-sphere if and only if either of the following holds.
\begin{enumerate}
    \item $G(\bfa)$ contains at least two isolated points, or
    \item $G(\bfa)$ contains a unique odd isolated point, and $G(\bfa)_{\rm ev}$ has an odd number of vertices with $\gcd(a_i,a_j)=2$ for any distinct $a_i, a_j \in G(\bfa)_{\rm ev}$. 
\end{enumerate}
\end{thm}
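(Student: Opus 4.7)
The plan is to follow Brieskorn's classical strategy via the Milnor fibration, reducing the topological question to an arithmetic statement about the exponents $a_0,\ldots,a_n$.

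First, since $\dim L(\bfa)=2n-1\ge 5$ and each $a_i>1$ forces $Y(\bfa)$ to have an isolated singularity at the origin (so $L(\bfa)$ is a smooth closed manifold), Smale's resolution of the higher-dimensional Poincar\'e conjecture gives that $L(\bfa)\cong S^{2n-1}$ as a topological manifold iff $L(\bfa)$ is a homotopy sphere. By Milnor's work on isolated hypersurface singularities, the Milnor fiber $F$ of $f=z_0^{a_0}+\cdots+z_n^{a_n}$ at the origin is $(n-1)$-connected and $L(\bfa)$ itself is $(n-2)$-connected; combined with Poincar\'e duality on $L(\bfa)$, this reduces the homotopy sphere condition to the single vanishing $H_{n-1}(L(\bfa);\bZ)=0$.

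Next, Milnor's theory identifies $H_{n-1}(L(\bfa);\bZ)$ with the cokernel of $I-h_*$, where $h_*\colon H_n(F;\bZ)\to H_n(F;\bZ)$ is the geometric monodromy of the Milnor fibration. Since $H_n(F;\bZ)$ is free of finite rank, this cokernel vanishes iff $\det(I-h_*)=\pm 1$. For the Brieskorn-Pham polynomial, Pham identified $F$ with a join of $a_i$-point sets and computed the spectrum of $h_*$ on $H_n(F;\bC)$ explicitly: the eigenvalues are exactly the products $\zeta_{a_0}^{k_0}\cdots\zeta_{a_n}^{k_n}$ with $\zeta_{a_i}:=e^{2\pi i/a_i}$ and $1\le k_i\le a_i-1$. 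The problem therefore reduces to showing
\[
\det(I-h_*)\;=\;\prod_{\substack{1\le k_i\le a_i-1\\ i=0,\ldots,n}}\bigl(1-\zeta_{a_0}^{k_0}\cdots\zeta_{a_n}^{k_n}\bigr)\;=\;\pm 1
\]
precisely under condition (1) or (2).

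This integer equals $\pm 1$ iff its $p$-adic valuation vanishes for every prime $p$. Since $1-\zeta$ is a unit in the ring of integers of $\bQ(\zeta)$ whenever $\zeta$ is a nontrivial root of unity of order not a prime power, only those factors for which $\zeta_{a_0}^{k_0}\cdots\zeta_{a_n}^{k_n}$ is a nontrivial $p$-power root of unity contribute to $v_p(\det(I-h_*))$. For each prime $p$, one counts such tuples $(k_0,\ldots,k_n)$ via a Chinese Remainder Theorem analysis after writing $a_i=p^{e_i}b_i$ with $p\nmid b_i$; the resulting count depends only on the mutual $\gcd$-pattern among the $a_i$, i.e.\ on the graph $G(\bfa)$. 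One then verifies that these $p$-adic valuations all vanish precisely under condition (1) or condition (2).

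The principal obstacle is the combinatorial bookkeeping in this last step, particularly at $p=2$. For an odd prime $p$, the presence of two vertices of $G(\bfa)$ isolated from each other suffices to force $v_p(\det(I-h_*))=0$ by a relatively short counting argument; at $p=2$, however, an extra parity constraint intervenes, which is exactly what gives rise to the subtler alternative (2) demanding a unique odd isolated vertex together with an odd number of pairwise $2$-related vertices in $G(\bfa)_{\rm ev}$. Unifying the odd-prime and the even-prime analyses into the clean dichotomy of the statement is the delicate part of the argument.
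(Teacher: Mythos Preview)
The paper does not give its own proof of this theorem: it is stated with the attribution \cite{Bri66} and then used as background input, with no argument supplied. So there is nothing in the paper to compare your proposal against.

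On its own merits, your outline is the correct classical route (Milnor fibration, $(n-2)$-connectivity of the link, Wang sequence to identify $H_{n-1}(L(\bfa);\bZ)$ with $\mathrm{coker}(I-h_*)$, Pham/Sebastiani--Thom description of the monodromy as a tensor product, and hence reduction to $\det(I-h_*)=\pm1$). What you have written, however, is a plan rather than a proof: the decisive step---showing that
\[
\prod_{1\le k_i\le a_i-1}\Bigl(1-\zeta_{a_0}^{k_0}\cdots\zeta_{a_n}^{k_n}\Bigr)=\pm1
\]
holds exactly under (1) or (2)---is asserted (``One then verifies\ldots'') but not carried out, and you yourself flag the $p=2$ bookkeeping as the principal obstacle without resolving it. If you want this to stand as a proof, you must actually perform that computation; Brieskorn's original argument does this by an explicit closed formula for the product (equivalently for the Alexander polynomial evaluated at $1$) in terms of the combinatorics of $G(\bfa)$, rather than a prime-by-prime valuation count, and that direct formula is what cleanly produces the dichotomy (1)/(2). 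Your $p$-adic phrasing can be made to work, but as written it stops short of the substance.
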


\begin{rem}
When $L(\bfa)$ is homeomorphic to the $(2n-1)$-sphere, we call $L(\bfa)$ a {\it Brieskorn sphere}. 
When $n \ge 3$, every diffeomorphism type of a homotopy $(2n-1)$-sphere which bounds parallelizable manifolds 
can be realized by a Brieskorn sphere $L(\bfa)$ for some $\bfa$ (\cite{Bri66}, \cite[Theorem 9.4.8]{book}). 
\end{rem}

\subsection{Diffeomorphism types of Brieskorn spheres}
We continue to use the notations in \ref{ss:topback}. 

 Let $\Theta_{2n-1}$ be the Kervaire-Milnor groups consisting of equivalence classes of oriented homotopy $(2n-1)$-spheres  that are equivalent under oriented h-cobordism (see e.g. \cite{KM63}). We are interested in the subgroup $bP_{2n} < \Theta_{2n-1}$ consisting of equivalence classes of oriented homotopy $(2n-1)$-spheres that bound parallelizable manifolds. 

If $n = 2m~(m\geq 2)$ is even, we have the following theorem on the diffeomorphism type of $L(\bfa)$. 

\begin{thm}
Let $n = 2m~(m\geq 2)$ and $\bfa \in \bZ_{>0}^{n+1}$ satisfying the condition in Theorem \ref{thm:brieskorn} so that $L(\bfa) \in bP_{4m}$. 
\begin{enumerate}
\item[(i)] (\cite[p.526]{KM63}, \cite[Corollary 3.20]{10.1007/BFb0074439}) The group $bP_{4m}$ is cyclic of order \[
|bP_{4m}|= a_m 2^{2m-2}(2^{2m-1}-1)\cdot\mathrm{numerator}\left(\frac{4B_{m}}{m}\right), 
\] where $a_m = \begin{cases}
1 & (m \in 2 \bZ) \\
2 & (m \notin 2\bZ)
\end{cases}$ and $B_{m}$ is the $m$-th Bernoulli number. 
\item[(ii)](\cite[Theorem 7.5]{KM63}, \cite[Satz 3]{Bri66}) The class of $L(\bfa)$ in $bP_{4m}$ is $\frac{\tau (\bfa)}{8}$ with a suitable choice of the generator of $bP_{4m}$, where $\tau(\bfa)$ is is the signature of the intersection pairing on $H^n$ of the Milnor fiber $M(\bfa)$. Moreover, $\tau(\bfa)$ can be described as follows.   
\begin{align*}
\tau ( \bfa ) = &~  \# \left\{\bfx = (x_0, \dots , x_{2m}) \in \bZ^{2m+1}\mid 0<x_i<a_i\textrm{ and } 0<\sum_{i=0}^{2m} \frac{x_i}{a_i}<1 \mod 2 \right\}\\
& - \# \left\{\bfx =(x_0, \dots , x_{2m}) \in \bZ^{2m+1}\mid 0<x_i<a_i\textrm{ and } 1<\sum_{i=0}^{2m} \frac{x_i}{a_i}<2 \mod 2 \right\}. 
\end{align*}

\end{enumerate}
\end{thm}

If $n = 2m + 1$~($m \ge 1$) is odd and $\bfa$ satisfies the condition in Theorem \ref{thm:brieskorn}, 
then $L(\bfa)$ is a homotopy $(4m+1)$-sphere and belongs to $bP_{4m+2}$. The group $bP_{4m+2}$ has order $1$ or $2$, and it is trivial for $4m+1 \in \{ 1,5,13,29,61\}$, and order 2 for $4m+1\not\in \{1,5,13,29,61,125\}$ thanks to the almost completely solved Kervaire invariant problem \cite{Bro69, HHR16}. 

The Kervaire sphere is a homotopy $(4m+1)$-sphere whose Arf invariant equals $1$. If $bP_{4m+2}\cong \bZ_2$ then the Kervaire sphere is the only non-trivial element. Brieskorn also proved the following (see e.g. \cite[Theorem 9.4.3]{book}). 

\begin{thm} Let $n = 2m + 1$~($m \ge 1$) and $\bfa$ as above so that $L(\bfa) \in bP_{4m+2}$. 

 Then $L(\bfa)$ is the Kervaire sphere if and only if all of the following holds:  
\begin{itemize}
\item the condition (2) of Theorem \ref{thm:brieskorn} holds. 
\item the one isolated point, say $a_0$, satisfies $a_0\equiv \pm 3 \mod 8$. 
\item $G(\bfa) = G(\bfa)_{\rm ev}  \cup \{a_0 \}$. 
\end{itemize}
\end{thm}

We remark that our method can only produce Sasaki-Einstein metrics on homotopy spheres in $bP_{2n}$. The quotient group $\Theta_{2n-1}/bP_{2n}$ is the hard part of the group of homotopy spheres, which almost completely reduces to the computation of stable homotopy groups of spheres. For a recent account on this topic, see \cite[Section 1.4]{IWX20}.

\subsection{K-stability}
We refer the definition of Fano cone singularities and K-polystability/semistability to \cite[Section 2.2]{MR4301561}. 

Throughout the paper, we always work with quasi-regular Fano cone singularities $(Y,\xi)$, i.e. $Y$ is an affine variety with klt singularities with the vertex $y \in Y$, and  $\xi$ is a rational Reeb vector generating a good $\mathbb{C}^*$-action on $Y$ \cite[Definition 2.12]{MR4301561}. 

In this case, we have the quotient morphism $Y \setminus \{y \} \rightarrow X$ by the $\bC^*$-action and 
  the branch divisor $\Delta_X$ so that $(X, \Delta_X)$ is a log Fano pair \cite[Definition 2.13]{MR4301561}. 
 
 We shall use the following. 
 
 \begin{thm}\label{thm:equivKpolyst}
 Let $(Y, \xi)$ be a Fano cone singularity and $(X, \Delta_X)$ be its $\bC^*$-quotient as above. 
Then the following are equivalent. 

\begin{enumerate} 
\item[(i)] $(Y, \xi)$ is K-polystable (resp. K-semistable) . 
\item[(ii)] $(X, \Delta_X)$ is K-polystable (resp. K-semistable).  
\end{enumerate}
(See e.g. \cite[Definition 2.23, 2.28]{MR4301561} for the definitions of K-polystability/ K-semistability of Fano cone singularities and log Fano pairs. )

Assume that $Y \setminus \{y \}$ is smooth. 
 Then the K-polystability of $(Y, \xi)$ is equivalent to the following. 
\begin{enumerate}
\item[(iii)] $Y$ admits a Ricci-flat K\"{a}hler cone metric and the link $L_Y$ of $Y$ admits a Sasaki-Einstein metric with Reeb vector field $\xi$. 
\end{enumerate} 
 \end{thm}

\begin{proof}
\noindent(i) $\Leftrightarrow$ (ii): This  follows from the correspondence of special test configurations of $(Y, \xi)$ and $(X, \Delta_X)$ as in \cite[2.5]{Li:2021tr} and the equality of generalized Futaki invariants \cite[Lemma 2.30]{MR4301561} (cf. \cite[Theorem 2.9]{Li:2021tr}). 

Now assume that $Y \setminus \{y \}$ is smooth. 

 Then the last equivalence follows from \cite[Theorem 1.1]{CS19} and \cite[Corollary 11.1.8]{book}, for example (See also \cite{AJL21, Li:2021tr}). 
\end{proof}

We prove Theorem \ref{thm:k-stab} by modifying the argument in \cite[Proposition 6.1]{ST21}. 

\begin{proof}[Proof of Theorem \ref{thm:k-stab}] 
Let $(Y(\bfa), \xi)$ be the Fano cone singularity induced by the Reeb vector field $\xi$ on $Y(\bfa)$. 
Let $(X, \Delta_X)$ be the log Fano pair induced by the $\bC^*$-action on $Y(\bfa)$ as above.  
Then $(Y(\bfa), \xi)$ is K-polystable (K-semistable) if and only if $(X, \Delta_X)$ is K-polystable (K-semistable) by Theorem \ref{thm:equivKpolyst}. 
Hence we shall show that $(X, \Delta_X)$ is K-polystable (K-semistable) if and only if the inequality (\ref{eq:maininequality}) holds. 

We see that 
\[
\Delta_X= \sum_{j=0}^n \left(1- \frac{1}{g_j}\right) H_j 
\]
for $g_j:= \gcd (d_0, \ldots , \hat{d_j}, \ldots, d_n)$ and $H_j:= (z_j=0) \cap X$. Note that $X$ is isomorphic as a variety to the quasi-smooth weighted hypersurface 
\[
(y_0^{a_0'}+ \cdots + y_n^{a_n'} =0) \subset \bP \left(\frac{d_0 g_0}{g}, \ldots , \frac{d_n g_n}{g} \right)=: \bP',  
\] 
where $a'_i:= a_i/g_i$ and $g:= g_0 \cdots g_n$. 

    
Let $\Pi \colon \bP' \rightarrow \bP^n$ be the Galois cover determined by $$[y_0 : \cdots : y_n] \mapsto [y_0^{a_0'} : \cdots : y_n^{a_n'}]. $$ Let $L:= (w_0 + \cdots + w_n =0) \subset \bP^n$ and $\pi:= \Pi|_{X} \colon X \rightarrow H$ be the induced Galois cover. Then we have the ramification formula
\[
K_X = \pi^* \left(K_L+ \sum_{i=0}^n \left(1- \frac{1}{a'_i} \right)L_i \right)
\]
for $L_i:= L \cap (w_i=0)$ for $i=0, \ldots ,n$. By these and $a_j' H_j = \pi^*(L_j)$, we see that 
\begin{equation}\label{eq:ramifXL}
K_X+ \Delta_X = \pi^*\left(K_L + \sum_{i=0}^n \left(1- \frac{1}{a_i}\right)L_i\right). 
\end{equation}
By this, we see that the log Fano condition is equivalent to $1 < \sum_{i=0}^n \frac{1}{a_i}$. We also see that the K-polystability (resp. \ K-semistability) of $(X, \Delta_X)$ is equivalent to that of $(L, \sum_{i=0}^n (1- \frac{1}{a_i})L_i))$ by the above equality (\ref{eq:ramifXL}) and \cite[Theorem 1.2]{Liu:2020wo}, \cite[Corollary 4.13]{Zhuang}. 
Hence the statement is reduced to the following.

\begin{claim}
$ \left(L, \sum_{i=0}^n (1- \frac{1}{a_i})L_i \right)$ is K-polystable (resp.\ K-semistable) if and only if 
\[
1+ \frac{n}{a_n} -\sum_{i=0}^n \frac{1}{a_i}> ~(\textrm{resp. }\geq) ~0. 
\]
\end{claim}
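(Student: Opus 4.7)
The plan is to apply the Fujita--Li valuative criterion for K-stability: the log Fano pair $(L,\Delta)$ with $\Delta=\sum_{i=0}^n c_i L_i$ and $c_i=1-1/a_i$ is K-semistable (resp.\ K-polystable) if and only if $\beta(E):=A_{(L,\Delta)}(E)-S_{(L,\Delta)}(E)\geq 0$ (resp.\ $>0$, unless $E$ induces a product test configuration) for every prime divisor $E$ over $L$. Setting $s=\sum_i 1/a_i$, I split the proof into necessity and sufficiency.

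\emph{Necessity.} I test $\beta$ on each boundary component $E=L_j$. Since $L_j$ appears in $\Delta$ with coefficient $c_j$, the log discrepancy is $A_{(L,\Delta)}(L_j)=1/a_j$. With $L\cong\mathbb{P}^{n-1}$, $-(K_L+\Delta)\sim_{\mathbb{Q}}(s-1)H$ (where $H$ denotes the hyperplane class), and $L_j\sim H$, a direct volume integral gives $S_{(L,\Delta)}(L_j)=(s-1)/n$, whence
\[\beta(L_j)=\frac{1}{a_j}-\frac{s-1}{n}=\frac{1+n/a_j-s}{n}.\]
Taking $j=n$ (with $a_n$ maximal), the inequality $s>1+n/a_n$ forces $\beta(L_n)<0$, contradicting K-semistability; and $s=1+n/a_n$ gives $\beta(L_n)=0$, while $\Aut(L,\Delta)$ is finite (the weighted arrangement has no continuous symmetry), so $L_n$ yields a non-product special test configuration with vanishing generalized Futaki invariant, contradicting K-polystability.

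\emph{Sufficiency.} Assume $s<1+n/a_n$ (resp.\ $\leq$), equivalently $nc_j\leq\sum_i c_i$ (resp.\ $<$) for every $j$. I claim that the stability threshold is attained by the boundary divisors:
\[\delta(L,\Delta)=\min_j\frac{A_{(L,\Delta)}(L_j)}{S_{(L,\Delta)}(L_j)}=\min_j\frac{n}{a_j(s-1)}=\frac{n}{a_n(s-1)}.\]
Under the hypothesis this quantity is $>1$ (resp.\ $\geq 1$), which by Fujita--Li yields K-stability (hence K-polystability) in the strict case and K-semistability in the non-strict case. The displayed identity for $\delta$ is a form of the K-stability/GIT-stability correspondence for log Fano pairs $(\mathbb{P}^{n-1},\sum c_i L_i)$ with $n+1$ hyperplanes in general position (a version of the Paul--Tian theorem); alternatively it can be proved via an Abban--Zhuang inductive estimate of $\delta$ along the flag $L\supset L_n\supset L_n\cap L_{n-1}\supset\cdots$ of linear subspaces of $L$, where adjunction at each step reduces to a lower-dimensional log Fano pair of the same hyperplane-arrangement shape.

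The principal obstacle is the sufficiency: necessity is a one-line volume computation, whereas sufficiency demands uniform control of $\beta(E)$ over \emph{all} prime divisorial valuations on $L$. The content of the hyperplane-arrangement K/GIT correspondence (or equivalently of the Abban--Zhuang flag estimate) is exactly that no exotic valuation produces a smaller $\beta$ than the boundary hyperplanes $L_j$, and verifying this constitutes the technical core of the proof.
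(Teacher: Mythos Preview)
Your necessity argument is correct and close to the paper's in spirit: testing $\beta$ on $L_n$ gives $\beta(L_n)=\tfrac{1}{n}\bigl(1+\tfrac{n}{a_n}-s\bigr)$, and since the $n+1$ hyperplanes are in general linear position the connected automorphism group of $(L,\Delta)$ is trivial, so $\beta(L_n)=0$ already rules out K-polystability.

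The sufficiency, however, is not proved. You assert that
\[
\delta(L,\Delta)=\min_j \frac{n}{a_j(s-1)}
\]
and justify it only by naming two black boxes (a ``Paul--Tian type'' K/GIT correspondence, or an Abban--Zhuang flag computation), then concede that ``verifying this constitutes the technical core of the proof.'' As written this is a pointer to a proof, not a proof: you have not checked that either black box applies in the precise form you need, nor carried out the Abban--Zhuang induction.

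The paper closes exactly this gap by citing the explicit combinatorial criterion of Fujita \cite[Corollary~1.6]{Fujita:2017wm}: $(L,\sum_i c_i L_i)$ with $c_i=1-\tfrac{1}{a_i}$ is K-polystable (resp.\ K-semistable) if and only if
\[
k\sum_{i=0}^n c_i \;-\; n\sum_{j=1}^k c_{i_j} \;>\;(\text{resp.}\;\ge)\;0
\]
for every $1\le k\le n-1$ and every choice $0\le i_1<\cdots<i_k\le n$. One line of arithmetic then shows the left side equals $k\bigl(1-s+\tfrac{n}{k}\sum_j \tfrac{1}{a_{i_j}}\bigr)\ge k\bigl(1-s+\tfrac{n}{a_n}\bigr)$, so the worst case is $k=1$, $i_1=n$, which is precisely your test divisor $L_n$. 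This is the content you gestured at; the paper's contribution is naming the right reference and making the reduction explicit, after which the claim is immediate.
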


\begin{proof}[Proof of Claim]
By \cite[Corollary 1.6]{Fujita:2017wm}, the K-polystability (resp.\ K-semistability) is equivalent to 
\begin{equation}\label{eq:lhs}
k \sum_{i=0}^n \left(1- \frac{1}{a_i}\right) - n \sum_{j=1}^k \left(1-\frac{1}{a_{i_j}}\right) > ~(\textrm{resp. }\geq) 0
\end{equation}
for all $1\le k \le n-1$ and $0 \le i_1 < \cdots < i_k \le n$. We obtain the claim since we have 
\begin{multline*}
\text{L.H.S. of (\ref{eq:lhs})} = k-k \sum_{i=0}^n \frac{1}{a_i} + n \sum_{j=1}^k \frac{1}{a_{i_j}} = k \left(1- \sum_{i=0}^n \frac{1}{a_i} + \frac{n}{k} \sum_{j=1}^k \frac{1}{a_{i_j}}\right) \\ 
\ge k \left(1- \sum_{i=0}^n \frac{1}{a_i}+ \frac{n}{a_n}\right). 
\end{multline*}
\end{proof}
Finally, the existence of Sasaki-Einstein metrics on $L(\bfa)$ follows from Theorem \ref{thm:equivKpolyst}. 
\end{proof}

\begin{rem}\label{rem:IYa}
Note that, for $Y(\bfa)$ as in Theorem \ref{thm:k-stab}, we have 
\[
I_{\bfa}:=I_{Y(\bfa)} := \sum_{i=0}^n \frac{d}{a_i} - d = d \left( \sum_{i=0}^n \frac{1}{a_i} -1 \right). 
\]
Hence the condition in Theorem \ref{thm:k-stab} is equivalent to 
$0 < I_{Y_{\bfa}} < n d_n$ as in \cite[Proposition 6.1]{ST21}. 
\end{rem}

We shall use the following propositions to distinguish the Sasaki-Einstein metrics on the links of weighted homogeneous hypersurface singularities.  

\begin{prop}\label{prop:linkandwhs} (\cite[Corollary 22]{BGK05}) 
For $i=1,2$, let 
\[
Y_i:= (F_i=0) \subset \bC^m
\]
 be an affine hypersurface defined by a weighted homogeneous polynomial 
$F_i$ with weights $\bw_i=(w_{i1}, \ldots , w_{im}) \in \bZ_{>0}^{m}$. 
Let $$X_i:= (Y_i \setminus \{0 \}) / \bC^*$$ be the quotient by the $\bC^*$-action on $Y_i$ of weights $\bw_i$. 
Assume that $X_i$ has a K\"{a}hler-Einstein orbifold metric and no orbifold holomorphic contact structure.  
Let $L_i$ be the corresponding link with the Einstein metric $g_i$. 

Then $(L_1, g_1)$ is isometric to $(L_2, g_2)$ if and only if the following conditions holds. 

\begin{itemize}
\item[(i)] $\bw_1 = \bw_2$ holds up to permutations. 
\item[(ii)] There exists an automorphism $\tau \in \Aut(\bC^m, \bw_1)$ as in \cite[15]{BGK05} such that $\tau(Y_1) = Y_2$ or its complex conjugate $\ol{Y}_2$. 
\end{itemize}
\end{prop}

\begin{prop}\label{prop:noofdcontact}
Let $0 \in Y=(F=0) \subset \bC^{n+1}$ be an isolated hypersurface singularity defined by a weighted homogeneous polynomial $F$ of degree $d$ for the $\bC^*$-action of weights $\bw = (w_0, \ldots ,w_n) \in \bZ_{>0}^{n+1}$. 
Let $X:= \left( Y \setminus \{0 \} \right) / \bC^*$ be the orbifold defined by the $\bC^*$-action. 

Then $X$ admits no orbifold holomorphic contact structure if one of the following holds: 
\begin{enumerate}
\item[(i)] $n$ is odd; 
\item[(ii)] $n=2m$ is even and $m \nmid  I_Y := \sum_{i=0}^n w_i -d$.  
\end{enumerate}
\end{prop}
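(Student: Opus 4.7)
The plan is to interpret ``orbifold contact structure'' on the complex orbifold $X := (Y\setminus\{0\})/\bC^*$ (of complex dimension $n-1$) as a \emph{holomorphic} contact structure, and to exploit the divisibility constraint that such a structure imposes on the anti-canonical class. Recall that a holomorphic contact structure on a complex orbifold of complex dimension $2k+1$ consists of an exact sequence $0 \to D \to T_X \to L \to 0$ (with $L$ an orbifold line bundle) together with a twisted symplectic form $d\eta|_D \in \Gamma(\Lambda^2 D^{\vee}\otimes L)$ that is nondegenerate; taking top exterior powers of $D$ gives $\det D \cong L^{k}$ and hence the key identity $-K_X^{\mathrm{orb}} \cong L^{k+1}$ in $\Pic^{\mathrm{orb}}(X)$.

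For part (i), if $n$ is odd then $\dim_\bC X = n-1$ is even, so a codimension-$1$ distribution $D \subset T_X$ has odd rank $n-2$, incompatible with a nondegenerate skew form; no holomorphic contact structure exists. For part (ii), write $n = 2m$ so that $\dim_\bC X = 2m-1$, and existence of a contact structure would force $-K_X^{\mathrm{orb}} \cong L^{m}$ for some orbifold line bundle $L$. I would then carry out two computations. First, compute $-K_X^{\mathrm{orb}}$: the Poincar\'e residue $dz_0 \wedge\cdots\wedge dz_n/dF$ generates $\omega_Y$ with $\bC^*$-character $I_Y = \sum_i w_i - d$, and the Euler vector field $\xi = \sum_i w_i z_i \partial_{z_i}$ generating the action is itself $\bC^*$-invariant, so the relative canonical of $\pi \colon Y\setminus\{0\}\to X$ is equivariantly trivial; hence $-K_X^{\mathrm{orb}}$ is identified with $\mathcal{O}_X(I_Y)$ under the standard character/orbifold-line-bundle dictionary. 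Second, identify $\Pic^{\mathrm{orb}}(X) \cong \bZ\cdot[\mathcal{O}_X(1)]$: orbifold line bundles on $X$ are the same as $\bC^*$-equivariant line bundles on $Y\setminus\{0\}$, and the forgetful sequence
\[
0 \to \bZ \to \Pic^{\bC^*}(Y\setminus\{0\}) \to \Pic(Y\setminus\{0\}) \to 0
\]
combined with $\Pic(Y\setminus\{0\}) = 0$ (from Grothendieck's factoriality theorem for isolated hypersurface singularities in dimension $\geq 4$, applicable since $n = 2m \geq 4$, together with the fact that $\{0\}$ has codimension $\geq 2$) yields the claim. Writing then $L = \mathcal{O}_X(a)$, the equation $L^{m} \cong \mathcal{O}_X(I_Y)$ forces $am = I_Y$, i.e.\ $m \mid I_Y$, giving the desired contrapositive.

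The main obstacle I anticipate is the Picard group computation in part (ii): one must carefully combine factoriality for the isolated hypersurface singularity $Y$ with the $\bC^*$-equivariant perspective on $Y\setminus\{0\}$ and with the orbifold structure on the quotient $X$. Should the factoriality/Picard route prove too blunt in this generality (where $Y$ is neither assumed Brieskorn--Pham nor $X$ assumed well-formed), a natural alternative is to argue rationally: the orbifold Chern class satisfies $c_1^{\mathrm{orb}}(-K_X) = I_Y \cdot c_1^{\mathrm{orb}}(\mathcal{O}_X(1)) \in H^2_{\mathrm{orb}}(X,\bQ)$, and the integrality of $c_1^{\mathrm{orb}}(L) = (I_Y/m)\cdot c_1^{\mathrm{orb}}(\mathcal{O}_X(1))$ then forces $m \mid I_Y$, provided one checks that $c_1^{\mathrm{orb}}(\mathcal{O}_X(1))$ is primitive in the appropriate integral orbifold cohomology lattice.
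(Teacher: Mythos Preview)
Your argument is correct and is precisely the content of \cite[\S 16]{BGK05}, which is all the paper's own proof invokes. The divisibility constraint $-K_X^{\mathrm{orb}}=\mathcal{O}_X(I_Y)\cong L^{m}$ in $\Pic^{\mathrm{orb}}(X)\cong\bZ$ (the latter via Grothendieck's factoriality theorem for $n=2m\geq 4$, the case $m=1$ being vacuous since then condition (ii) is never satisfied) is exactly the mechanism used there.
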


\begin{proof}
This follows from \cite[\S 16]{BGK05}. 
\end{proof}

\subsection{Normalized volumes}
\begin{defn}[\cite{Li18}]
Let $y\in Y$ be a complex $n$-dimensional klt singularity. Let $v$ be a real valuation of $\bC(Y)$ centered at $y$, i.e. $v: \bC(Y)^\times \to \bR$ is a valuation satisfying $v(\bC^\times)=0$ and $v(\fm_y)>0$. We define the normalized volume of $v$ as 
\[
\hvol(v):=A_Y(v)^n \cdot \vol(v),
\]
where $A_Y(v)$ is the log discrepancy of $v$ according to \cite{JM12, BdFFU15}, and $\vol(v)$ is the volume of $v$ according to \cite{ELS03}. We define the local volume of the singularity $y\in Y$ as
\[
\hvol(y, Y):= \min_v \hvol(v),
\]
where $v$ runs over all real valuations of $\bC(Y)$ centered at $y$. The existence of a $\hvol$-minimizing valuation was proved by Blum \cite{Blu18}.
\end{defn}

Li's motivation of introducing the normalized volume was to study the local K-stability theory for klt singularities. We refer the readers to the surveys \cite{LLX20, Zhu23} for more backgrounds and discussions. 
Here we will focus on the case of K-polystable Fano cones. 

\begin{thm}\label{thm:nv-Kps}
Let $(Y,\xi)$ be a K-polystable Fano cone singularity with $y\in Y$ the cone vertex. Then $\hvol(y, Y) = \hvol(\wt_\xi)$, where $\wt_\xi$ is the valuation induced by the Reeb vector field $\xi$ (see e.g. \cite[(2.6)]{MR4301561}).

If moreover $Y\setminus\{y\}$ is smooth, then
\[
\hvol(y, Y) = n^n \cdot \frac{\Vol(L_Y, g_{L_Y})}{\Vol(S^{2n-1}, g_{\rm st})}.
\]
Here $g_{L_Y}$ is the Sasaki-Einstein metric on the link $L_Y$ of $Y$ whose existence follows from Theorem \ref{thm:equivKpolyst}, $g_{\rm st}$ is the standard metric on the round sphere $S^{2n-1}$, and $\Vol$ denotes the Riemannian volume.
\end{thm}

\begin{proof}
The first statement follows from \cite[Theorem 1.3]{LX18}. The second statement follows from the well-known equality $\hvol(\wt_\xi) = n^n \cdot \frac{\Vol(L_Y, g_{L_Y})}{\Vol(S^{2n-1}, g_{\rm st})}$, see e.g. \cite{MR2318866, MSY08, DS15, HS17, LX18}.
\end{proof}

\begin{expl}\label{eg:BPvolume}
Let $(Y(\bfa), \xi)$ be a K-polystable Brieskorn-Pham singularity with $a_0\leq \cdots\leq a_n$. Then by \cite[(3.18)]{MR2318866} and Theorem \ref{thm:nv-Kps} we have
\[
\hvol(0, Y(\bfa) ) = \frac{d (d_0+\cdots+d_n- d)^n}{d_0\cdots d_n}= \left(\sum_{i=0}^n \frac{1}{a_i} - 1 \right)^n\cdot \prod_{i=0}^n a_i .
\]
By Theorem \ref{thm:k-stab}, we have $0<\sum_{i=0}^n \frac{1}{a_i} - 1\leq \frac{n}{a_n}$ which implies $a_0\leq n$ and
\begin{equation}\label{eq:hvolineq}
\hvol(0, Y(\bfa) ) \leq n^n \prod_{i=0}^{n-1} \frac{a_i}{a_n}\leq n^n \frac{a_0}{a_n}\leq \frac{n^{n+1}}{a_n}.
\end{equation}

\end{expl}

\begin{rem}\label{rem:EinsteinFamily}
Let $\cM$ be the set of Riemannian metrics on a given compact smooth manifold $M$ of dimension $n$. 
For a Riemannian manifold $(M, g)$, we have the total scalar curvature functional on $\cM$
\[
S(g):= \int_M s_g \vol_g, 
\]
where $s_g$ is the scalar curvature and $\vol_g$ is the Riemannian volume form of $g$ \cite[4.16 Definition]{MR867684}. 
By \cite[4.21 Theorem]{MR867684}, we know that $g$ is Einstein if it is a critical point of the functional $\dps \frac{S(g)}{\Vol(g)^{\frac{n-2}{n}}}$, where $\Vol(g) := \int_M \vol_g$ is the Riemannian volume. 
Note that $s_g$ is (the Einstein) constant if $g$ is Einstein since $\Ric_g = s_g \cdot g$ in this case. 
By these, we see that $s_g \cdot \Vol(g)^{\frac{2}{n}}$ is constant on a continuous family of Einstein metrics. 
(See also \cite[12.52 Corollary]{MR867684}. )

Hence, in order to find an infinitely many families of Einstein metrics on a given manifold, 
it is enough to find a family of Einstein metrics with infinitely many values of $s_g \cdot \Vol(g)^{\frac{2}{n}}$. 
Note that, if $(M, g)$ is Sasaki-Einstein, then $s_g = \dim M -1$ (See e.g. \cite[Lemma 11.1.5]{book}). 
By these, Theorem \ref{thm:nv-Kps} and Example \ref{eg:BPvolume}, it is enough to exhibit K-polystable Fano cone singularities $(Y(\bfa), \xi)$ with infinitely many values of normalized volumes $\hvol(0, Y(\bfa) )$ to show Theorem \ref{thm:main} and Corollary \ref{conj:CS}. By the inequality (\ref{eq:hvolineq}), it suffices to find $\bfa$ with arbitrarily large $a_n$. 
\end{rem}

\section{Infinite sequence of K-polystable Fano cones}\label{s:infinite}

\subsection{Dimension $4m+1$}\label{ss:odd}
In this case, $n=2m+1$ with $m\geq 1$.  By the prime number theorem, for every sufficiently large prime number $p_n$, there are at least $n-2$ distinct prime numbers $p_2,\cdots, p_{n-1}$ in the interval $(\frac{(n-2)}{2(n-1)}p_n, \frac{1}{2}p_n)$. Let 
\[
a_0=a_1=2,~ a_i = 2p_i\textrm{ for }2\leq i\leq n-1, \textrm{ and } a_n= p_n.
\]
We can check that 
\[
1<\sum_{i=0}^n \frac{1}{a_i} = 1 + \sum_{i=2}^{n-1} \frac{1}{2p_i} + \frac{1}{p_n} < 1 + (n-2) \cdot \frac{(n-1)}{(n-2)p_n} + \frac{1}{p_n} = 1+\frac{n}{a_n}.
\]
Thus by Theorem \ref{thm:k-stab} the Fano cone $(Y(\bfa), \xi(\bfa))$ is K-polystable, i.e. $L(\bfa)$ admits a Sasaki-Einstein metric. Moreover, the condition of Theorem \ref{thm:brieskorn}(2) holds for the link $L(\bfa)$. Thus $L(\bfa)$ is a standard sphere if $p_n\equiv \pm 1\mod 8$, and is a Kervaire sphere if $p_n\equiv \pm 3 \mod 8$. By Dirichlet's theorem, there are infinitely many prime numbers $p_n$ satisfying each of the congruency conditions. Thus, by Remark \ref{rem:EinsteinFamily}, we find infinite families of Sasaki-Einstein metrics on both the standard sphere and the Kervaire sphere of dimension $4m+1$.
Note that since $n$ is odd,  $\left( Y(\bfa) \setminus \{0 \} \right)/\bC^*$ admits no holomorphic orbifold contact structure and so these metrics are pairwise non-isometric by Proposition \ref{prop:linkandwhs}. 

\subsection{Dimension $4m-1$} 
In this case, $n=2m$ with $m\geq 2$. 

\subsubsection{Standard spheres}\label{ss:standard}
 Let $p,q\geq 2$ be two positive integers. Let
\[
\bfa = (a_0, \dots, a_{n-2}, a_{n-1}, a_n) = (n-1, \dots , n-1, p, q). 
\]
By Theorem \ref{thm:brieskorn}, $L(\bfa)$ is a homotopy $(4m-1)$-sphere if $n-1$, $p$, and $q$ are pairwise coprime. Since $\sum_{i=0}^n \frac{1}{a_i}= 1+\frac{1}{p}+\frac{1}{q}$, by Theorem \ref{thm:k-stab} we know that $L(\bfa)$ admits a Sasaki-Einstein metric if $(n-1)p>q>n-1$ and $(n-1)q>p>n-1$. Note that, by  $d:=\mathrm{lcm} (a_0, \ldots ,a_n) = (n-1)pq$ and Remark \ref{rem:IYa}, we have 
\[
I_{\bfa} =(n-1)pq \left(\frac{1}{p} + \frac{1}{q}\right) = (n-1)(p+q). 
\]


For simplicity, denote by $s:= n-1 = 2m-1$.
First of all, we show that $\tau (\bfa)$ is an integer combination of $\gamma_j$ for $1\leq j\leq s$, where 
\[
\gamma_j : = \# \left\{(x,y)\in \bZ^2\mid 0<x<p, ~ 0<y<q, ~\textrm{ and } 0<\frac{x}{p}+\frac{y}{q} <\frac{j}{s} \right\}.
\]
Note that $\frac{x}{p} + \frac{y}{q}$ will never be of the form $\frac{j}{s}$ since $s, p , q$ are coprime. 
Then we may group $\bfx \in \bZ^{n+1}$ by the sum $\sigma = \sum_{i=0}^{n-2} x_i$, and for each value of $\sigma$ the corresponding terms in $\tau (\bfa)$ is an integer combination of $\gamma_j - \gamma_{j-1}$ for $1\leq j\leq 2s$. Furthermore, the involution $(x,y)\mapsto (p-x, q-y)$ yields $\gamma_j - \gamma_{j-1} = \gamma_{2s-j+1} - \gamma_{2s-j}$, so we only need $1\leq j\leq s$.

Next, we deduce a formula of $\gamma_j$ for special values of $p$ and $q$. Let $k$ be a positive integer, and let $p = sk+1$, $q = sp - 1 = s^2 k + s -1$. Clearly $s$, $p$, and $q$ are pairwise coprime satisfying $s<p<q<sp$, so $L(\bfa)$ is a Sasaki-Einstein homotopy sphere.
Then $\dps \frac{x}{p}+\frac{y}{q}<\frac{j}{s}$ implies that 
\[
x< \frac{jp}{s} = \frac{j(sk+1)}{s}   = jk +\frac{j}{s}.
\]
Hence $1\leq x\leq jk$. For each such $x$, we have 
\[
 y < q \left(\frac{j}{s}-\frac{x}{p}\right) = (sp - 1)\frac{pj-sx}{sp} = pj - sx - \frac{pj-sx}{sp}.
\]
Hence $1\leq y\leq pj-sx-1$ as $\frac{pj-sx}{sp}\in (0,1]$. Therefore,
\[
\gamma_j = \sum_{x=1}^{jk} (pj-sx-1) = \frac{jk}{2} (sjk +2j -s-2).
\]
Thus $\gamma_j$ is divisible by $\frac{k}{2}$ assuming $k$ is even. As a result, $\frac{\tau (\bfa)}{8}$ is divisible by $\frac{k}{16}$ assuming $16\mid k$. Thus for every $k$ that is a multiple of $16|bP_{4m}|$, we have that  $\frac{\tau (\bfa)}{8}$ is divisible by $|bP_{4m}|$, i.e. $L(\bfa)$ is diffeomorphic to the standard sphere. Thus we get an infinite sequence of $p,q$ where $L(\bfa)$ is a Sasaki-Einstein standard sphere.
Since 
\[
\frac{s+1}{2}=\frac{n}{2} \nmid I_{Y_{\bfa}}= (n-1)(p+q)=s(sk+k+1)=s(k(s+1)+1),
\]
$\left( Y(\bfa) \setminus \{0 \} \right)/\bC^*$ admits no holomorphic contact structure by Proposition \ref{prop:noofdcontact}.
This implies that the metrics we found are pairwise non-isometric by 
Proposition \ref{prop:linkandwhs} and gives infinitely many inequivalent families by Remark \ref{rem:EinsteinFamily}. 

\subsubsection{Exotic spheres}\label{sub:exotic}
For $k \in \bZ_{>0}$, let $\bfa_2:= (2, \ldots, 2, 3, 6k \pm 1) \in \bZ^{n+1}$.
 Brieskorn (\cite{Bri66}, \cite[Example 9.4.7]{book}) found that $L(\bfa_2)$ 
 satisfies  
\[
\frac{\tau(\bfa_2)}{8}=(-1)^mk. 
\] So if we run over all $k\in \bZ_{>0}$ we get all $(4m-1)$-spheres in $bP_{4m}$. 

Let us fix $k \in \{1, \ldots , |bP_{4m}| \}$. In the following, we study the sequence 
\[
a_0=a_1=2,~ a_2 = \cdots = a_{n-2}=p, ~ a_{n-1}=p+1, ~ a_n = p+l,
\]
where $p,l \ge 2$ are integers such that $p$ is even, and $p$, $p+1$, and $p+l$ are pairwise coprime.
In other words, $p$ is even, $\gcd(p,l)=1$ and $\gcd(p+1, l-1)=1$. 
Such $p, p+1, p+l$ can be realized by taking, for any $q \in \bZ_{\ge 0}$, \[
l:=6k-3 (\text{or } 6k-1), \ \  p:= ql(l-1) +2.
\]  
Let $\bfa_{p,l}:= (a_0, \ldots, a_n) = (2, 2, p, \ldots, p, p+1, p+l) \in \bZ^{n+1}$. 

For $p \gg 0$ (w.r.t.\ $n$ and $l$),  we have 
\[
1< \sum_{i=0}^n \frac{1}{a_i} = 1 + \frac{n-3}{p} + \frac{1}{p+1} + \frac{1}{p+l} < 1+ \frac{n}{p+l} = 1+ \frac{n}{a_n}, 
\]
thus see that $L(\bfa_{p,l})$ admits a Sasaki-Einstein metric by Theorem \ref{thm:k-stab}. 

Let us recall the definition of a quasi-polynomial. 

\begin{defn}
A function $f \colon \bZ_{\ge 0} \to \bZ$ is a quasi-polynomial if there exist $s \in \bZ_{>0}$ and polynomials $p_0, \ldots , p_{s-1}$ 
such that $f(n) = p_i(n)$ when $i \equiv n \mod s$. 
\end{defn}

 The main result of this section is the following. 
 
\begin{prop}\label{prop:tauqpoly}
Under the above assumptions, $\tau (\bfa_{p,l})$ is a quasi-polynomial in $p$ of period $l(l-1)$. 
\end{prop}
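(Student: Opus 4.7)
The plan is to rewrite $\tau(\bfa_p)$ as a signed lattice-point count in a family of polytopes whose defining inequalities have coefficients polynomial in $p$, and then to deduce quasi-polynomiality via a parametric Ehrhart-type argument, tracking the period through the coprimality hypotheses.

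\textbf{Reduction.} Since $a_0=a_1=2$ forces $x_0=x_1=1$, setting $\sigma := \sum_{i=2}^{n-2} x_i$ and $\alpha := \frac{x_{n-1}}{p+1} + \frac{x_n}{p+l} \in (0,2)$, the signature formula reads
\[
-\tau(\bfa_p) = \sum_{x_{n-1}=1}^{p} \sum_{x_n=1}^{p+l-1} \sum_{\sigma} r(\sigma, p)\,(-1)^{\lfloor \sigma/p + \alpha \rfloor},
\]
where $r(\sigma, p)$ is the number of ordered $(n-3)$-tuples in $\{1,\ldots,p-1\}$ summing to $\sigma$. First I would decompose $\sigma = qp + z$ with $0 \le z \le p-1$; since $z/p+\alpha \in [0,3)$, one has $(-1)^{\lfloor \sigma/p + \alpha \rfloor} = (-1)^q(-1)^{\lfloor z/p+\alpha\rfloor}$, so that
\[
-\tau(\bfa_p) = \sum_{x_{n-1},x_n,z} h(z,p)\,(-1)^{\lfloor z/p+\alpha\rfloor},\qquad h(z,p) := \sum_q (-1)^q r(qp+z,p).
\]
A generating-function computation in $\bQ[t]/(t^p+1)$---using $\sum_{k=1}^{p-1} t^k = (t+1)/(1-t)$ when $t^p=-1$ together with $(1-t)(1+t+\cdots+t^{p-1}) \equiv 2$---identifies the polynomial $H(t) := \sum_{z=0}^{p-1} h(z,p) t^z$ with $2^{-(n-3)}(1+t)^{n-3}(1+t+\cdots+t^{p-1})^{n-3}$ modulo $t^p+1$, giving an explicit polynomial expression for $h(z,p)$ in $(z, p)$.

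\textbf{Polytope analysis.} Since $\lfloor z/p+\alpha\rfloor \in \{0,1,2\}$, the sign $(-1)^{\lfloor z/p+\alpha\rfloor}$ differs from $+1$ only when this floor equals $1$. The ``total'' contribution $\sum_{z,x_{n-1},x_n} h(z,p)$ is visibly polynomial in $p$, so the problem reduces to showing that the ``middle'' sum $B(p) := \sum_{\{\lfloor z/p+\alpha\rfloor = 1\}} h(z,p)$ is a quasi-polynomial of period $l(l-1)$. Using the identity $p\alpha = x_{n-1} + x_n - \beta$ with $\beta := \frac{x_{n-1}}{p+1} + \frac{l x_n}{p+l} \in (0, l+1)$, the floor-one condition becomes $\beta \le z + x_{n-1} + x_n - p < p + \beta$. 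I would then partition the lattice points $(x_{n-1}, x_n)$ by $k := \lfloor\beta\rfloor \in \{0,1,\ldots,l\}$; the region $\lfloor\beta\rfloor = k$ is cut out by the linear inequalities $k(p+1)(p+l) \le (p+l)x_{n-1} + l(p+1)x_n < (k+1)(p+1)(p+l)$, whose coefficients are polynomial in $p$. On each such piece, the $z$-range becomes an integer interval of length $p$, and the inner sum $\sum_z h(z,p)$ over this interval is polynomial in $(p, x_{n-1}, x_n)$.

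\textbf{Period and main obstacle.} For each fixed $k$, the resulting count over $(x_{n-1}, x_n)$ is a polynomial-weighted lattice-point count in a planar polytope whose facets have polynomial-in-$p$ coefficients. Parametric Ehrhart theory then yields that, within each residue class of $p$ modulo a suitable period, this count is polynomial in $p$. The coprimality assumptions $\gcd(p,l)=1$ and $\gcd(p+1,l-1)=1$ imply $\beta \notin \bZ$ at every valid lattice point $(x_{n-1}, x_n)$---else, reducing $(p+l)x_{n-1} + l(p+1)x_n$ modulo $p+1$ and modulo $p+l$ would force $x_{n-1}$ or $x_n$ out of range---so no lattice points lie on the boundary hyperplanes. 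Crucially, the shift $p \mapsto p + l(l-1)$ preserves $p \bmod l$ and $(p+1) \bmod (l-1)$ by the Chinese Remainder Theorem, hence preserves both the admissibility conditions and the combinatorial type of each planar polytope; within each admissible residue class modulo $l(l-1)$ the count $B(p)$---and therefore $\tau(\bfa_p)$---is polynomial in $p$. The main obstacle is precisely this last step: verifying that as $p$ evolves within a residue class the vertices of each planar polytope vary polynomially in $p$, and that no shorter period suffices; this requires delicate bookkeeping of the fractional parts $\{x_{n-1}/(p+1)\}$ and $\{l x_n/(p+l)\}$ across the shift $p \mapsto p+l(l-1)$.
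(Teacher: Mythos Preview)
Your outline takes a genuinely different route from the paper's. The paper first reduces $\tau$ by inclusion--exclusion to the counts $\beta_\eta = \#\{\bfx \in \bZ_{\geq 0}^{n-1} : \sum_{i\ge 2} x_i/a_i \leq \eta\}$, then writes $\beta_\eta = \sum_j F(\eta p - j)\gamma_j$ with $F$ a binomial coefficient and $\gamma_j$ a planar triangle count. A closed formula for $\gamma_j$ involving the two floors $R=\lfloor lj/p\rfloor$ and $\lfloor\frac{1}{l-1}(j-(p+1)r+R)\rfloor$ is derived by hand, and the elementary lemma that a sum of a quasi-polynomial over a quasi-polynomial range is again quasi-polynomial then finishes the argument; the period $l(l-1)$ is simply read off from those two floor expressions. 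You instead keep the sign $(-1)^{\lfloor\sigma/p+\alpha\rfloor}$, collapse the $(n-3)$-fold simplex into the weight $h(z,p)$ via the generating function in $\bQ[t]/(t^p+1)$, and aim to invoke parametric Ehrhart theory on the planar regions cut out by $\lfloor\beta\rfloor=k$.

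There is, however, a genuine gap, and you name it yourself. Two smaller issues first: the claim that $h(z,p)$ is a single polynomial in $(z,p)$ is not correct (already for $n-3=2$ one finds $h(0,p)=1-p$ but $h(z,p)=2z-p$ for $1\le z\le p-1$; in general boundary corrections occur for $0\le z<n-3$), and the $z$-range is not always of length $p$ once intersected with $[0,p-1]$, which forces further casework in $x_{n-1}+x_n$. Both are repairable bookkeeping. The serious problem is the last step. The observation that $p\mapsto p+l(l-1)$ preserves $p\bmod l$ and $(p+1)\bmod(l-1)$, and hence the coprimality hypotheses and the combinatorial type of each $k$-polytope, does \emph{not} bound the Ehrhart period: combinatorial type is necessary but far from sufficient for the count to be polynomial on a residue class. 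Your $k$-th facet $(p+l)x_{n-1}+l(p+1)x_n=k(p+1)(p+l)$ meets the box in points with denominators $p+1$ and $p+l$, and nothing in the argument shows that the resulting fractional parts---and hence the weighted lattice-point count---depend only on $p\bmod l(l-1)$ rather than on some modulus growing with $p$. This is exactly the computation the paper does explicitly (via the formula for $\gamma_j$ and the case split of Lemma~\ref{lem:min^}) instead of delegating to a black-box Ehrhart statement; without either carrying it out or citing a parametric Ehrhart theorem that actually pins down the period, the proof is incomplete.
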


\begin{cor}\label{cor:polynomialinq}
$\tau(\bfa_{p,l}) \equiv (-1)^m k \mod |bP_{4m}|$ for infinitely many $p$. 

In particular, any homotopy sphere in $bP_{4m}$ admits infinitely many inequivalent families of Sasaki--Einstein metrics. 
\end{cor}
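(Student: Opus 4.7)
The plan is to combine Proposition \ref{prop:tauqpoly} with Brieskorn's classical signature computation, leveraging the crucial observation that the Brieskorn tuple $\bfa_0 = (2,\ldots,2,3,6k\pm1)$ is precisely the specialization of the family $\bfa_p$ at $p = 2$ (with $l = 6k-3$ or $l = 6k-1$). Since these values of $l$ are odd, the coprimality conditions defining the family $\bfa_p$ are consistent with $p=2$, so this specialization is admissible. Hence Brieskorn's formula gives $\tau(\bfa_{p=2}) = \tau(\bfa_0) = 8(-1)^m k$.

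Next I would exploit the polynomial structure. By Proposition \ref{prop:tauqpoly}, restricted to the residue class $p\equiv 2\pmod{l(l-1)}$, the function $q\mapsto Q(q):=\tau(\bfa_{ql(l-1)+2})$ is a single polynomial $Q(q)\in\bQ[q]$ taking integer values on $\bZ_{\ge 0}$, with $Q(0)=8(-1)^m k$. Expanding $Q$ in the Newton basis $Q(q)=\sum_{j=0}^d b_j\binom{q}{j}$ with $b_j\in\bZ$ and $d=\deg Q$, the standard fact that $N\mid \binom{q}{j}$ whenever $N d!\mid q$ and $1\le j\le d$ yields $Q(q)\equiv Q(0)\pmod N$ on the arithmetic progression $q\equiv 0\pmod{Nd!}$. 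Applying this with $N=8\,|bP_{4m}|$ produces an infinite AP of $p$'s along which
\[
\tau(\bfa_p)\equiv 8(-1)^m k \pmod{8\,|bP_{4m}|}.
\]
For $p$ large enough in this AP the K-polystability condition of Theorem \ref{thm:k-stab} holds (as verified earlier in the paper), so $L(\bfa_p)$ admits a Sasaki-Einstein metric representing the class $(-1)^m k\in bP_{4m}$. Letting $k$ range over $\{1,\ldots,|bP_{4m}|\}$ with a suitable choice $l\in\{6k-3,6k-1\}$ then realises every class in $bP_{4m}$ infinitely often.

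To upgrade this to infinitely many \emph{inequivalent} Sasaki-Einstein families, I would apply Proposition \ref{prop:noofdcontact}(ii): it suffices to produce, inside the AP above, infinitely many $p$ with $m\nmid I_{\bfa_p}$. Using $d = p(p+1)(p+l)$ (which follows from $\gcd(p,l)=\gcd(p+1,l-1)=1$), a direct expansion gives
\[
I_{\bfa_p}\equiv -\bigl(p^2 + 2(l+1)p + 3l\bigr)\pmod m.
\]
For $m=2$ this quantity is automatically odd (since $p$ is even and $l$ is odd); for $m\ge 3$, the monic quadratic $X^2+2(l+1)X+3l$ is not identically zero as a function on $\bZ/m$, so restricting to a further sub-AP where $p\bmod m$ avoids its roots suffices. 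Then \cite[Corollary 22]{BGK05} converts the absence of an orbifold contact structure into pairwise non-equivalence of the resulting Sasakian structures, giving the stated infinite family on each fixed homotopy sphere.

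The main obstacle I foresee is reconciling the simultaneous congruences on $p$: the polynomial-periodicity step pins $p$ into an AP of modulus divisible by $Nd!\cdot l(l-1)$, while the contact obstruction demands a favourable residue of $p$ modulo $m$. A routine Chinese Remainder argument should handle this, the only delicate point being the case $\gcd(m,l(l-1))>1$, where the AP may be forced onto unwanted residues mod $m$; this can be side-stepped by using the freedom to modify $l$ (e.g.\ by shifting $k$ by a multiple of $|bP_{4m}|$, which preserves the target class in $bP_{4m}$) until the quadratic above has a non-root among the available residues.
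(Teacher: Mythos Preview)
Your argument is correct and follows the same strategy as the paper: restrict the quasi-polynomial $\tau(\bfa_p)$ of Proposition~\ref{prop:tauqpoly} to the progression $p\equiv 2\pmod{l(l-1)}$, anchor it at Brieskorn's value $\tau(\bfa_2)=8(-1)^m k$, and pass to a sub-progression on which the polynomial is constant modulo $8\,|bP_{4m}|$. Your Newton-basis step (forcing $Nd!\mid q$ so that $N\mid\binom{q}{j}$ for $1\le j\le d$) makes explicit a point the paper leaves implicit.

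The only substantive difference is in the inequivalence step, where the paper is cleaner. Rather than leaving $p\bmod m$ free and reconciling congruences afterwards, the paper builds $m$ into the modulus from the outset, taking $p\equiv 2\pmod{m\,l(l-1)\,|bP_{4m}|}$. Then $p\equiv 2\pmod m$ along the whole progression, so $I_{\bfa_p}\bmod m$ is a \emph{constant} depending only on the choice $l\in\{6k-3,6k-1\}$: substituting $p=2$ into your formula $I_{\bfa_p}\equiv -(p^2+2(l+1)p+3l)\pmod m$ gives the two values $-42k+13$ and $-42k-1$. Their difference is $14$, and a direct check at $m\in\{2,7,14\}$ shows the first value is nonzero there, so for every pair $(m,k)$ at least one of the two admissible $l$'s already guarantees $m\nmid I_{\bfa_p}$ for \emph{all} $p$ in the progression. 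This sidesteps your CRT juggling entirely; note also that your specific fallback of shifting $k\mapsto k+|bP_{4m}|$ changes $I_{\bfa_p}\bmod m$ by $-42\,|bP_{4m}|$ and hence does nothing when $m\mid 42\,|bP_{4m}|$, so that particular manoeuvre is not by itself sufficient.
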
 

\begin{proof}[Proof of Corollary \ref{cor:polynomialinq}]
Take  $k \in \{1,\ldots, |bP_{4m}| \}$ and $l=6k-3$ (or $l=6k-1$). Then for any $p \equiv 2 \mod ml(l-1)|bP_{4m}|$, that is, $p=ql(l-1) +2$, where $q=q'm|bP_{4m}|$ for some $q' \in \bZ_{>0}$, we get that $\tau(\bfa_{p,l}) \equiv (-1)^mk \mod |bP_{4m}|$  by Proposition \ref{prop:tauqpoly}. 

We shall confirm that the metrics on $L(\bfa_{p,l})$ are pairwise non-isometric. 
Note that $$d=\mathrm{lcm} (a_0, \ldots, a_n) = p (p+1)(p +l)$$ and 
\begin{align*}
I_{\bfa}&= d\left(\sum_{i=0}^n \frac{1}{a_i} -1\right) =(n-3)(p+1)(p+l) + p(p+l) + p(p+1) \\ &=n(p+1)(p+l) -3(p+1)(p+l)+ p(2p+l+1) 
\\ &\equiv -p^2 -2pl -2p -3l  \mod m.
\end{align*}

If $l=6k-3$, then $I_{\bfa} \equiv -42k +13 \mod m$, while if $l=6k-1$, then $I_{\bfa} \equiv -42k-1$. Hence, for a fixed $m$ and $k$, we can always choose $l=6k-3$ or $l=6k-1$ so that $m \nmid I_{\bfa}$. This implies that $X=\left( Y(\bfa) \setminus \{0 \} \right)/\bC^*$ admits no holomorphic contact structure by Proposition \ref{prop:noofdcontact} and so the metrics we found are pairwise non-isometric by Proposition \ref{prop:linkandwhs}. 
Remark \ref{rem:EinsteinFamily} implies the existence of infinitely many families. 

\end{proof}
\begin{proof}[Proof of Theorem \ref{thm:main}]
Just combine Subsection \ref{ss:odd} and Corollary \ref{cor:polynomialinq}.
\end{proof}

The rest of the section is devoted to the proof of Proposition \ref{prop:tauqpoly}. We will repeatedly use the following lemma.

\begin{lem}\label{lem:qpolysum}
Let $f(x,y )$ be a quasi-polynomial in $x$ and $y$ of period $l$ (that is, for $0 \le k,k' <l$, there exists a polynomial $f_{kk'}(x, y)$ such that $f(x,y) = f_{kk'}(x,y)$ when $x \equiv k, \ y \equiv k' \mod l$). 

Then $\sum_{x=0}^s f(x,y)$ is a quasi-polynomial in $s$ and $y$ of period $l$.
\end{lem}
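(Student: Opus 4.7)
The plan is to group the summation by the residue class of $x$ modulo $l$ and reduce to the classical fact that $\sum_{j=0}^N P(j)$ is a polynomial in $N$ whenever $P$ is a polynomial. Fix residues $k, k' \in \{0, 1, \ldots, l-1\}$ and write $s = lq + r$ with $0 \le r < l$. For $y \equiv k' \pmod l$, the contribution to $\sum_{x=0}^s f(x,y)$ coming from $x \equiv k \pmod l$ is
\[
S_{k,k'}(s, y) := \sum_{j=0}^{N_k(s)} f_{kk'}(k + jl,\, y),
\]
where $N_k(s) = \lfloor (s-k)/l \rfloor$. The key observation is that $N_k(s) = q$ if $r \ge k$ and $N_k(s) = q-1$ if $r < k$; in either case $N_k(s)$ depends linearly on $s$ once $r$ is fixed.

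Next, since $f_{kk'}(k+jl, y)$ is a polynomial in $j$ and $y$, Faulhaber's formula (i.e.\ the classical fact that $\sum_{j=0}^N P(j,y)$ is polynomial in $N$ and $y$ for $P$ polynomial) implies that the partial sum $\sum_{j=0}^N f_{kk'}(k+jl, y)$ is a polynomial in $N$ and $y$. Substituting $N = N_k(s)$, which is linear in $s$ on each residue class $s \equiv r \pmod l$, shows that $S_{k,k'}(s,y)$ agrees with a polynomial in $s$ and $y$ whenever $s \equiv r \pmod l$ and $y \equiv k' \pmod l$.

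Finally, for each $k' \in \{0,\ldots,l-1\}$, one writes
\[
\sum_{x=0}^s f(x,y) = \sum_{k=0}^{l-1} S_{k,k'}(s,y) \qquad \text{for } y \equiv k' \pmod l,
\]
and the right-hand side is a polynomial in $s$ and $y$ on each fixed residue class $(r, k')$ of $(s,y)$ modulo $l$. This is exactly the required quasi-polynomiality of period $l$.

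I do not anticipate a genuine obstacle: the argument is essentially bookkeeping of residue classes. The only mild subtlety is the two-valued behavior of $N_k(s)$ according to whether $r \ge k$ or $r < k$, but this is automatically absorbed by separating $s$ into residue classes modulo $l$ from the outset. No tools beyond the polynomial summation formula and the definition of quasi-polynomial are required.
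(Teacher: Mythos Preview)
Your proposal is correct and follows essentially the same approach as the paper: both group the sum by the residue class of $x$ modulo $l$, reduce via Faulhaber's formula to a polynomial in the upper limit $\lfloor (s-k)/l\rfloor$, and then observe that this floor is linear in $s$ on each residue class of $s$ modulo $l$. The paper first expands $f_{kk'}$ into monomials $x^i y^j$ to reduce to power sums, whereas you apply Faulhaber directly to $f_{kk'}(k+jl,y)$ as a polynomial in $j$; this is a cosmetic difference only.
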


\begin{proof}
In the proof, we may assume that $y \equiv k' \mod l$ for a fixed $k'$ so that, if $x \equiv k \mod l$, then 

\[
f(x,y) = \sum_{i=0}^m \sum_{j=0}^m a_{ij}^k x^i y^j 
\] 
for some $m$ and constants $a_{ij}^k$ for $i,j=0, \ldots, m$ and $k=0, \ldots, l-1$. 
Write
\begin{multline*}
\sum_{x=0}^s f(x,y)= \sum_{k=0}^{l-1} \sum_{\substack{0 \le x \equiv k \\ \mod l}}^{s} f(x,y) = \sum_{k=0}^{l-1} \sum_{\substack{0 \le x \equiv k \\ \mod l}}^{s} \left(\sum_{i=0}^m \sum_{j=0}^m  a_{ij}^k x^i y^j \right) \\
= \sum_{j=0}^m \sum_{i=0}^m  \sum_{k=0}^{l-1} a_{ij}^k y^j \sum_{\substack{0 \le x \equiv k \\ \mod l}}^{s} x^i. 
\end{multline*}	

It is hence enough to prove that
\[
 \sum_{\substack{0 \le x \equiv k \\ \mod l}}^{s} x^n
\]	
is a quasi-polynomial in $s$ of period $l$ for any non-negative integer $n$. 
We have 
\begin{align*}
\sum_{\substack{0 \le x \equiv k \\ \mod l}}^{s} x^n = \sum_{d=0}^{\left\lfloor \frac{s-k}{l} \right\rfloor}(dl + k)^n = \sum_{d=0}^{\left\lfloor \frac{s-k}{l} \right\rfloor} \sum_{i=0}^n \binom{n}{i} (dl)^{i} k^{n-i}=\sum_{i=0}^n \binom{n}{i} l^{i} k^{n-i}\sum_{d=0}^{\left\lfloor \frac{s-k}{l} \right\rfloor}d^i.
\end{align*}
For any $i=0, \ldots, n$ the sum $\sum_{d=0}^{\left\lfloor \frac{s-k}{l} \right\rfloor }d^i$ is a polynomial in $\left\lfloor \frac{s-k}{l} \right\rfloor$ of degree $i+1$ by Faulhaber's formula (See e.g. \cite[(2.2)]{MR1376174}) and $\left\lfloor \frac{s-k}{l} \right\rfloor$ is a quasi-polynomial in $s$ of period $l$.
Hence we obtain the required quasi-polynomialness. 
\end{proof}

For the proof of Proposition \ref{prop:tauqpoly} we first need two preliminary lemmas on counting integral points in polytopes.

\begin{lem}\label{lem:triangle}
Let $p$ and $l$ be positive integers and let $j$ be a non-negative integer.
Set $R:= \left\lfloor \frac{lj}{p} \right\rfloor$ and 
\[
\gamma_j:=\# \left\{(x,y)\in \bZ_{\geq 0}^2\mid \frac{x}{p+1}+\frac{y}{p+l}\leq \frac{j}{p} \right\}.
\]
 Then 
\begin{multline}\label{eq:gamma_j}
\gamma_j = \frac{1}{2}\left(j- \left\lfloor \frac{R}{l} \right\rfloor \right) \left(j+ \left\lfloor\frac{R}{l}\right\rfloor +1\right) + (j+1) (R+1) \\ + \sum_{r=0}^R \min\left\{ \left\lfloor \frac{R}{l} \right\rfloor, \left\lfloor\frac{1}{l-1} (j - (p+1) r + R) \right\rfloor \right\}.
\end{multline}
\end{lem}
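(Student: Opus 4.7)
The plan is to enumerate lattice points in the triangle after a judicious change of coordinates, and then reduce the problem to an arithmetic identity whose proof follows from the monotonicity of an auxiliary floor function.

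First, I would clear denominators. Multiplying $\frac{x}{p+1} + \frac{y}{p+l} \leq \frac{j}{p}$ by $p(p+1)(p+l)$ and writing $jl = Rp + \rho$ with $0 \leq \rho < p$ yields the equivalent integer inequality $(p+l)x + (p+1)y \leq j(p+l+1) + R$. Substituting $u := x + y$ and $v := x$ rewrites this as $(p+1)u + (l-1)v \leq j(p+l+1) + R$ with $0 \leq v \leq u$; checking the vertex $v = 0$ and using $(jl + R)/(p+1) = R + \rho/(p+1) < R + 1$ bounds $u \leq j + R$. For each such $u$, the admissible $v$ form $\{0, 1, \ldots, \min(u, \phi(u-j))\}$ where $\phi(t) := \lfloor (jl + R - (p+1)t)/(l-1) \rfloor$.

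Next, I would split the sum at $u = j$. For $u \leq j$ (so $t := u - j \leq 0$), the automatic inequality $R + j \geq (p+l)t$ gives $\phi(t) \geq u$, so the count is $u + 1$ and summing over $u = 0, \ldots, j$ contributes $\binom{j+2}{2}$. For $u = j + t$ with $1 \leq t \leq R$, the decomposition $jl + R - (p+1)t = (l-1)j + (j + R - (p+1)t)$ gives $\phi(t) = j + h(t)$ where $h(t) := \lfloor (j + R - (p+1)t)/(l-1) \rfloor$, so the count is $j + 1 + \min(t, h(t))$. Summing,
\[
\gamma_j = \binom{j+1}{2} + (j+1)(R+1) + \sum_{t=1}^{R} \min(t, h(t)).
\]
Comparing with the claimed formula, and noting that $h(0) \geq Q$ gives $\min(Q, h(0)) = Q$, it remains to establish the identity
\[
\sum_{r=1}^{R} \bigl[\min(Q, h(r)) - \min(r, h(r))\bigr] = \binom{Q}{2}.
\]

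The identity follows from the monotonicity of $r \mapsto h(r)$ combined with the inequalities $Ql \leq R < (Q+1)l$ and $Qp \leq j < (Q+1)p$ coming from $Q = \lfloor R/l \rfloor$ and $R = \lfloor jl/p \rfloor$. These give $h(Q) \geq Q$ (equivalent to $j + R \geq Q(p+l)$) and $h(Q+1) \leq Q$ (equivalent to $j + R < (Q+1)(p+l)$), so by monotonicity $h(r) \geq Q$ for $r \leq Q$ and $h(r) \leq Q$ for $r \geq Q+1$. Consequently the summand equals $Q - r$ for $r \in [1, Q]$ and vanishes for $r > Q$, summing to $\sum_{r=1}^{Q}(Q-r) = \binom{Q}{2}$.

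The main obstacle will be choosing the change of variables $(u, v) = (x+y, x)$: this is what makes the denominator $(l-1)$ control the nontrivial correction and exposes the three-part structure of the formula. Once the right coordinates are in hand, both the slicing step and the monotonicity argument for the arithmetic identity run smoothly.
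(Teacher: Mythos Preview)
Your argument is correct, and it takes a genuinely different route from the paper.

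The paper slices directly by the variable $x$: from $\frac{x}{p+1}+\frac{y}{p+l}\le \frac{j}{p}$ it reads off $0\le x\le j+\lfloor j/p\rfloor$ and, for each such $x$, $0\le y\le j-x+\lfloor \frac{lj}{p}-\frac{(l-1)x}{p+1}\rfloor$. Summing the $(j-x)$ part gives the first term; the remaining sum $\sum_x\bigl(1+\lfloor\cdot\rfloor\bigr)$ is then evaluated by the ``level set'' trick, counting for each $r\in[0,R]$ the number of $x$ with $\lfloor\cdot\rfloor\ge r$. That count turns out to be exactly $j+1+\min\{\lfloor R/l\rfloor,\lfloor\frac{1}{l-1}(j-(p+1)r+R)\rfloor\}$, so the formula \eqref{eq:gamma_j} drops out immediately with no further identity needed.

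Your approach instead changes coordinates to $(u,v)=(x+y,x)$, which rewrites the constraint as $(p+1)u+(l-1)v\le j(p+l+1)+R$ and thereby explains \emph{why} the denominator $l-1$ governs the correction term. Slicing by $u$ gives the clean intermediate form $\gamma_j=\binom{j+1}{2}+(j+1)(R+1)+\sum_{t=1}^{R}\min(t,h(t))$, but this is not yet the stated formula: you then need the identity $\sum_{r=1}^{R}\bigl[\min(Q,h(r))-\min(r,h(r))\bigr]=\binom{Q}{2}$, which you establish via the monotonicity of $h$ together with the threshold inequalities $h(Q)\ge Q$ and $h(Q+1)\le Q$. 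So your argument trades one computation (level-counting) for another (the arithmetic identity); it is slightly longer but more transparent about the structural role of $l-1$, while the paper's version reaches \eqref{eq:gamma_j} in one stroke.
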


\begin{proof}
As $\frac{x}{p+1}+\frac{y}{p+l}\leq \frac{j}{p}$, we have $x\leq\frac{j(p+1)}{p}= j + \frac{j}{p}$, i.e. $x\leq j  + \left\lfloor \frac{j}{p}\right\rfloor $. For each $x$, we have 
\[
y\leq  \left(\frac{j}{p} - \frac{x}{p+1}\right)(p+l) = j-x + \frac{lj}{p}-\frac{(l-1)x}{p+1}.
\]
In other words, $y\leq j-x + \left\lfloor \frac{lj}{p}-\frac{(l-1)x}{p+1} \right\rfloor$. Therefore,
\begin{align*}
\gamma_j & = \sum_{x=0} ^{j + \left\lfloor\frac{j}{p} \right\rfloor} (j-x) + 1+ \left\lfloor \frac{lj}{p}-\frac{(l-1)x}{p+1} \right\rfloor\\
& = \frac{1}{2}\left(j- \left\lfloor\frac{j}{p}\right\rfloor \right) \left(j+ \left\lfloor \frac{j}{p} \right\rfloor +1 \right) +\sum_{x=0}^{j + \left\lfloor \frac{j}{p} \right\rfloor} 1+ \left\lfloor \frac{lj}{p}-\frac{(l-1)x}{p+1} \right\rfloor.
\end{align*}

For any non-negative integer $r$ we have
\begin{align*}
&\left\lfloor \frac{lj}{p}-\frac{(l-1)x}{p+1} \right\rfloor\geq r \Longleftrightarrow \frac{lj}{p}-\frac{(l-1)x}{p+1} \geq r \Longleftrightarrow x\leq \frac{p+1}{l-1} \left(\frac{lj}{p}-r\right) \\
& \Longleftrightarrow x\leq j + \left\lfloor \frac{1}{l-1} \left(j - (p+1) r +\frac{lj}{p}\right) \right\rfloor
= j + \left\lfloor \frac{1}{l-1} (j - (p+1) r + R)\right\rfloor .
\end{align*}
Clearly $j + \left\lfloor \frac{1}{l-1} (j - (p+1) r + R)\right\rfloor \geq 0$ for every $0\leq r\leq R$. Also note that $\left\lfloor \frac{j}{p}\right\rfloor  = \left\lfloor \frac{R}{l}\right\rfloor $.
We have
\begin{align*}
    & \sum_{x=0}^{j + \left\lfloor \frac{j}{p}\right\rfloor } 1+\left\lfloor \frac{lj}{p}-\frac{(l-1)x}{p+1} \right\rfloor  
    = \sum_{r=0}^{R} \# \left\{ 0\leq x\leq j + \left\lfloor \frac{R}{l} \right\rfloor \mid \left\lfloor  \frac{lj}{p}-\frac{(l-1)x}{p+1} \right\rfloor \geq r \right\}\\
    & = \sum_{r=0}^{R} \# \left\{ 0\leq x\leq j + \left\lfloor \frac{R}{l} \right\rfloor \mid x\leq j + \left\lfloor \frac{1}{l-1} (j - (p+1) r + R)\right\rfloor \right\}\\
    & = \sum_{r=0}^R j+1 + \min\left\{\left\lfloor \frac{R}{l} \right\rfloor , \left\lfloor \frac{1}{l-1} (j - (p+1) r + R)\right\rfloor \right\}\\
    & = (j+1)(R+1) + \sum_{r=0}^R \min \left\{\left\lfloor \frac{R}{l} \right\rfloor , \left\lfloor \frac{1}{l-1} (j - (p+1) r + R)\right\rfloor \right\}.
\end{align*}
Putting all together, we get the desired formula.
\end{proof}

\begin{lem}\label{lem:lower}
Let $p$ be positive integer and let $l, \eta$ be a non-negative integer. Then 
\[
\delta_\eta:=\# \left\{\bfx\in \bZ^{n}\mid x_i\geq 0 \textrm{ and } \sum_{i=1}^{n-1} \frac{x_i}{p} + \frac{x_n}{p+l}\leq\eta \right\}
\]
is a quasi-polynomial in $p$ of period $l$ (meaning that $\delta_\eta$ is a polynomial in $p$ if $l=0,1$). 
\end{lem}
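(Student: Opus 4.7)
The plan is to peel off the last coordinate, rewrite $\delta_\eta$ as a finite nested binomial sum, and then extract the residue-class dependence on $p$ explicitly. First I would condition on $x_n$: for fixed $x_n\in\{0,1,\ldots,\eta(p+l)\}$, the number of $(x_1,\ldots,x_{n-1})\in\bZ_{\ge 0}^{n-1}$ with $\sum_{i<n}x_i\le p\eta - px_n/(p+l)$ equals $\binom{\lfloor p\eta - px_n/(p+l)\rfloor + n-1}{n-1}$. Decomposing $x_n=(p+l)k+r$ with $0\le k\le\eta-1$, $0\le r\le p+l-1$ (plus the boundary value $x_n=\eta(p+l)$), and simplifying via $\lfloor p\eta - p((p+l)k+r)/(p+l)\rfloor = p(\eta-k) - \lceil pr/(p+l)\rceil$, one arrives at
\[
\delta_\eta \;=\; 1 + \sum_{c=1}^{\eta} T_c(p), \qquad T_c(p) \;:=\; \sum_{r=0}^{p+l-1} \binom{cp - \lceil pr/(p+l)\rceil + n-1}{n-1},
\]
with the convention $\lceil 0\rceil=0$.

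The key step would be a multiplicity identity for the non-decreasing map $\phi\colon\{0,\ldots,p+l-1\}\to\{0,\ldots,p\}$ with $\phi(0):=0$ and $\phi(r):=\lceil pr/(p+l)\rceil$ for $r\ge 1$. Setting $m_k:=\#\{r : \phi(r)=k\}$ and computing $m_k=\lfloor k(p+l)/p\rfloor - \lfloor (k-1)(p+l)/p\rfloor$ for $1\le k\le p-1$ (with a boundary adjustment at $k=p$ that accounts for the excluded point $r=p+l$), one obtains
\[
m_k \;=\; 1 + \#\bigl\{\,j\in\{1,\ldots,l-1\} : \lceil jp/l\rceil = k\,\bigr\} \qquad \text{for every } k\in\{0,1,\ldots,p\}.
\]
Plugging this back in and collapsing the uniform contribution $\sum_{k=0}^p \binom{cp-k+n-1}{n-1}$ by the hockey-stick identity to $\binom{cp+n}{n}-\binom{(c-1)p+n-1}{n}$, we are left with
\[
T_c(p) \;=\; \binom{cp+n}{n} - \binom{(c-1)p+n-1}{n} \;+\; \sum_{j=1}^{l-1} \binom{cp - \lceil jp/l\rceil + n-1}{n-1}.
\]

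To conclude, I would decompose by residue class of $p$ modulo $l$. Writing $p=lq+b$ with $b\in\{0,\ldots,l-1\}$, one has $\lceil jp/l\rceil = jq + \lceil jb/l\rceil$, so
\[
cp - \lceil jp/l\rceil \;=\; (cl-j)q + cb - \lceil jb/l\rceil
\]
is linear in $p$ on the class $p\equiv b\pmod{l}$. Thus every summand in the $j$-sum is polynomial in $p$ on each residue class, making the $j$-sum a quasi-polynomial of period $l$; the same then holds for $T_c(p)$ and, after summing over the fixed range $c=1,\ldots,\eta$, for $\delta_\eta$. When $l\in\{0,1\}$ the $j$-sum is empty and $\delta_\eta$ is a genuine polynomial in $p$, as required. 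The delicate step is the multiplicity identity above: it replaces $\lceil pr/(p+l)\rceil$ (whose period in $p$ is of order $p(p+l)/\gcd(p,l)$, with no clean behaviour modulo $l$) by $\lceil jp/l\rceil$ (piecewise-linear of period exactly $l$), and this reciprocity of Dedekind-sum flavour is the main obstacle; once it is in hand, the remainder of the argument is bookkeeping.
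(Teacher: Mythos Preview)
Your argument is correct; the multiplicity identity is the only nontrivial step, and it follows exactly as you indicate: for $1\le k\le p-1$ one has $m_k=\lfloor k(p+l)/p\rfloor-\lfloor(k-1)(p+l)/p\rfloor=1+\lfloor kl/p\rfloor-\lfloor(k-1)l/p\rfloor$, and $\lfloor kl/p\rfloor-\lfloor(k-1)l/p\rfloor$ counts the integers $j$ with $(k-1)l/p<j\le kl/p$, which is precisely the set $\{j:\lceil jp/l\rceil=k\}$; this interval lies in $(0,l)$, so the restriction $1\le j\le l-1$ is automatic, and the boundary cases $k=0,p$ check by hand.

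The route, however, is genuinely different from the paper's. The paper conditions on $\sigma=\sum_{i<n}x_i$ (writing $\sigma=\eta p-j$), counts the range of $x_n$ as $j+R+1$ with $R=\lfloor lj/p\rfloor$, and then groups the resulting sum $\sum_j F(\eta p-j)(j+R+1)$ by the value of $R\in\{0,\dots,\eta l-1\}$; on each block $e_R^-\le j\le e_R^+$ the endpoints are linear quasi-polynomials in $p$ of period $l$, and an auxiliary lemma on summing quasi-polynomials finishes the job. You condition on $x_n$ instead and, via the reciprocity identity, collapse a sum of $p+l$ terms to a polynomial plus a fixed sum of $l-1$ terms, each visibly quasi-polynomial of period $l$. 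Your approach is more explicit and self-contained---it yields a closed formula for $\delta_\eta$ and needs no separate summation lemma---whereas the paper's approach reuses machinery (the quasi-polynomial summation lemma and the $R$-blocking via $e_R^\pm$) that is set up anyway for the much harder computation of $\tau(\mathbf a_p)$.
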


\begin{proof}
If $l=0$, then this follows from Ehrhart's theorem (see e.g. \cite[Theorem 3.23]{BR}), so we can assume $l \ge 1$.
Denote by $\sigma:=\sum_{i=1}^{n-1} x_i$. Then for any given integer $j \in [0, \eta p]$, the range of $y:=x_n$ such that $\sigma = \eta p - j$ is
\[
y \leq (p+l) \frac{j}{p} = j + \frac{lj}{p}, \quad \textrm{i.e. } y \leq j + R,
\]
where $R=\left\lfloor \frac{lj}{p}\right\rfloor $.
For each $\sigma$, the number of solutions of $(x_1, \cdots, x_{n-1})$ is $F(\sigma):= \binom{\sigma + n-2}{n-2}$. Thus
\begin{multline*}
\delta_\eta = \sum_{j=0}^{\eta p} F(\eta p - j)\cdot (j + R + 1) \\
= \sum_{j=0}^{\eta p} F(\eta p - j) \cdot (j+1) + \sum_{R=0}^{\eta l -1} R\cdot \sum_{j=e_R^-}^{e_R^+} F(\eta p - j) + F(0)\cdot \eta l, 
\end{multline*}
where $e_R^{\pm}$ are defined as 
\begin{equation*}
e_R^-:= \left\lceil \frac{pR}{l}\right\rceil, \quad e_R^+:=\left\lfloor \frac{p(R+1)-1}{l}\right\rfloor .
\end{equation*}

By Lemma \ref{lem:qpolysum}, the first term in $\delta_\eta$ is a polynomial in $p$ and in the second term, $\sum_{j=e_R^-}^{e_R^+} F(\eta p - j)$ is a polynomial in $\eta p - e_R^-$ and $\eta p - e_R^+$. The latter are quasi-polynomials in $p$ of period $l$. Thus $\delta_\eta$ is a quasi-polynomial in $p$ of period $l$.
\end{proof}

For $j, r, R \in \bZ_{\ge 0}$, let  \[
\gamma'_R(j):= \frac{1}{2}\left(j-\left\lfloor \frac{R}{l}\right\rfloor \right) \left(j+\left\lfloor \frac{R}{l}\right\rfloor +1\right) + (j+1) (R+1), 
\]
\[
\gamma''_{r,R}(j):= \min \left\{\left\lfloor \frac{R}{l} \right\rfloor , \left\lfloor \frac{1}{l-1} (j - (p+1) r + R)\right\rfloor \right\}
\]
so that $\gamma_j= \gamma'_R(j) + \sum_{r=0}^R \gamma''_{r,R}(j)$ for $R= \left\lfloor \frac{lj}{p}\right\rfloor $ as in Lemma \ref{lem:triangle}. 

\begin{proof}[Proof of Proposition \ref{prop:tauqpoly}]

The proof has several steps. Denote by $\sigma:=\sum_{i=2}^{n-2} x_i$, $x:=x_{n-1}$, and $y:=x_{n}$.  We start with the following reduction. 

\begin{claim}
It is enough to show that, for each integer $1\leq \eta \leq n-1$, the number
\[
\beta_\eta:=\# \left\{\bfx =(x_2,\ldots,x_n) \in \bZ^{n-1}\mid x_i\geq 0 \textrm{ and } \sum_{i=2}^{n} \frac{x_i}{a_i}\leq\eta \right\}
\]		
is a quasi-polynomial in $p$ with period $l(l-1)$.
\end{claim}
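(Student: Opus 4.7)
The plan is to write $\tau(\bfa_p)$ as a $\bZ$-linear combination of $\beta_\eta$-type counts and to check that, granted the Claim, each count appearing is a quasi-polynomial in $p$ of period $l(l-1)$, so that Proposition~\ref{prop:tauqpoly} follows immediately.

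First I would rewrite the signature in a convenient form. Since $a_0 = a_1 = 2$ forces $x_0 = x_1 = 1$, the total weighted sum equals $1 + S(\bfx)$ with $S(\bfx) := \sum_{i=2}^n x_i/a_i \in (0, n-1)$. Partitioning lattice points according to the integer shell containing $S$ and unpacking the $\mathrm{mod}\,2$ conditions in the signature formula yields
\[
\tau(\bfa_p) \;=\; \sum_{\eta=1}^{n-1} (-1)^{\eta}\, \beta^{\circ}_\eta, \qquad \beta^{\circ}_\eta \;:=\; \#\{\bfx \in \bZ^{n-1} : 0 < x_i < a_i,\ \eta - 1 < S(\bfx) < \eta\},
\]
so it suffices to prove that each $\beta^{\circ}_\eta$ is a quasi-polynomial in $p$ of period $l(l-1)$.

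Second I would perform two rounds of inclusion--exclusion. To remove the upper bounds $x_i < a_i$, for each subset $I \subseteq \{2, \ldots, n\}$ of ``violators'' where $x_i \geq a_i$, the substitution $x_i \mapsto x_i + a_i$ shifts the $S$-bounds by the integer $-|I|$ and relaxes the positivity on $i \in I$ to $x_i \geq 0$. Then the remaining strict positivity on $i \notin I$ is converted via the identity $\#\{x_i > 0 \ \forall i \notin I\} = \sum_{J \subseteq \{2, \ldots, n\} \setminus I} (-1)^{|J|} \#\{x_i = 0 \ \forall i \in J\}$, which simply deletes the variables indexed by $J$. Writing the open shell as $\#\{S < \mu\} - \#\{S \leq \mu - 1\} = (\#\{S \leq \mu\} - \#\{S = \mu\}) - \#\{S \leq \mu - 1\}$ with $\mu = \eta - |I| \in \bZ$, one expresses $\beta^{\circ}_\eta$ as a signed $\bZ$-linear combination of the counts
\[
N^{J}_\mu \;:=\; \#\{\bfx \in \bZ_{\geq 0}^{\{2, \ldots, n\} \setminus J} : S_{J^c}(\bfx) \leq \mu\}
\]
together with their hyperplane analogues where $\leq$ is replaced by $=$, for integers $\mu$ with $0 \leq \mu \leq n-1$ (counts with $\mu < 0$ vanish).

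Finally I would invoke the Claim and its natural extensions. When $J = \emptyset$ we have $N^{J}_\mu = \beta_\mu$, which is a quasi-polynomial in $p$ of period $l(l-1)$ by assumption. For $J \neq \emptyset$ the count is the analogous simplex lattice-point count in fewer variables, and the very same arguments used to handle $\beta_\eta$ (essentially Lemma~\ref{lem:lower} together with its straightforward variant with some $p$-coordinates omitted) show that it is a quasi-polynomial of the same period; the hyperplane counts $\#\{S_{J^c} = \mu\}$ live on a rational polytope of one lower dimension and are handled identically by slicing. Summing the resulting quasi-polynomials against the inclusion--exclusion signs yields $\tau(\bfa_p)$ as a quasi-polynomial in $p$ of period $l(l-1)$. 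The main obstacle will not be any single hard estimate but the combinatorial bookkeeping of the $(I,J)$-expansion---in particular verifying that every $S$-bound that arises after the shifts is indeed an integer, so that Lemma~\ref{lem:lower}-type results apply; the involution $x_i \mapsto a_i - x_i$, which identifies $\beta^{\circ}_\eta$ with $\beta^{\circ}_{n-\eta}$, can be used to roughly halve the casework.
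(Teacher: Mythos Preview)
Your reduction is correct and follows essentially the same route as the paper: eliminate $x_0=x_1=1$, write $\tau$ as an alternating sum of shell counts, then use inclusion--exclusion twice (first on the upper bounds $x_i<a_i$, then on strict versus non-strict positivity) to land on $\beta_\eta$-type counts in the full and in fewer variables, the latter being handled by induction on the number of variables with Lemma~\ref{lem:lower} as the base case. The only organisational difference is that the paper uses the pairwise coprimeness of $p,\,p+1,\,p+l$ at the outset to observe that $S(\bfx)$ is never an integer while $0<x<p+1$ and $0<y<p+l$, so the open shell $\{\eta-1<S<\eta\}$ equals the half-open one $\{\eta-1<S\le\eta\}$ inside the box and no separate hyperplane counts $\{S=\mu\}$ are needed; you instead propose to treat those boundary counts directly, which also works (they are easily seen to be polynomials in $p$ by the same coprimeness) but is a small amount of extra bookkeeping.
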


\begin{proof}[Proof of Claim]
Since  $a_0=a_1=2$, we have 
\begin{align*}
	\tau (\bfa_{p,l}) = &~  \# \left\{\bfx =(x_2,\ldots,x_n) \in \bZ^{n-1} \mid 0<x_i<a_i\textrm{ and } 1<\sum_{i=2}^{n} \frac{x_i}{a_i}<2 \mod 2 \right\}\\
	& - \# \left\{\bfx =(x_2,\ldots,x_n) \in \bZ^{n-1} \mid 0<x_i<a_i\textrm{ and } 0<\sum_{i=2}^{n} \frac{x_i}{a_i}<1 \mod 2 \right\}.
\end{align*}

For each $1\leq \eta\leq n-1$, let 
\[
\alpha_\eta:=\# \left\{\bfx =(x_2,\ldots,x_n) \in \bZ^{n-1}\mid 0<x_i<a_i\textrm{ and } 0<\sum_{i=2}^{n} \frac{x_i}{a_i}\leq \eta \right\}.
\]
Clearly, $$\alpha_\eta-\alpha_{\eta-1} = \# \{\bfx =(x_2,\ldots,x_n) \in \bZ^{n-1} \mid 0<x_i<a_i\textrm{ and } \eta-1<\sum_{i=2}^{n} \frac{x_i}{a_i}<\eta\},$$ as 
$$\sum_{i=2}^{n} \frac{x_i}{a_i}=\frac{\sigma}{p}+\frac{x}{p+1}+\frac{y}{p+l}$$ is not an integer by coprimeness for $p+1, p+l$ together with  $0<x<p+1$ or $0< y < p+l$. Thus $\tau (\bfa_{p,l}) = \sum_{\eta=1}^{n-1} (-1)^\eta (\alpha_\eta - \alpha_{\eta-1})$.  This reduces the problem  to show that $\alpha_\eta$ is a quasi-polynomial in $p$ of period $l(l-1)$ for each $1\leq \eta \leq n-1$. 

We define
\[
\beta_\eta':=\# \left\{\bfx =(x_2,\ldots,x_n) \in \bZ^{n-1} \mid x_i> 0 \textrm{ and } \sum_{i=2}^{n} \frac{x_i}{a_i}\leq\eta \right\}.
\]

If $U := \{\bfx\in \bZ^{n-1}\mid x_i> 0 \textrm{ and } \sum_{i=2}^{n} \frac{x_i}{a_i}\leq\eta\}$, and $U_i := \{\bfx\in U\mid x_i\geq a_i\}$ for $i=2,\ldots , n$, then 
$\alpha_\eta = \# (U\setminus \cup_{i=2}^{n} U_i)$. By inclusion-exclusion, we know that $\alpha_\eta$ is an integer combination of $\#U =\beta_\eta'$ and $\#\cap_{i\in I} U_i$ for subsets $I\subset \{2,3,\cdots,n\}$. By replacing $x_i$ with $x_i - a_i$, we see that $\#\cap_{i\in I} U_i = \beta_{\eta-\#I}$. Thus $\alpha_\eta$ is an integer combination of $\beta_{\bullet}$ and $\beta_{\bullet}'$, so it suffices to show quasi-polynomialness of $\beta$'s.
Furthermore, using inclusion-exclusion again we can express $\beta_\eta-\beta_\eta'$ as a integer combination of lower-dimensional counting numbers of the form $\beta_\eta$ (or $\delta_{\eta}$ as in Lemma \ref{lem:lower}). 
Hence, using induction on the dimension we reduce to only showing that $\beta_\eta$ are quasi-polynomials in $p$ and the claim is proven.
\end{proof}

We now focus on $\beta_\eta$. If we fix $\sigma\geq 0$, then the number of solutions $\sum_{i=2}^{n-2} x_i=\sigma$ with each $x_i\in \bZ_{\geq 0}$ equals $\binom{\sigma+n-4}{n-4}$. For each $0\leq j\leq \eta p$, let $\gamma_j$ be as in Lemma \ref{lem:triangle}. Denote by $F(x) := \binom{x+n-4}{n-4}$. 

Thus we have 
\[
\beta_\eta = \sum_{j = 0}^{\eta p} 
F(\eta p-j) \gamma_j.
\]

 For each $0\leq R\leq \eta l-1$, the range of $j$ is $e_R^-\leq j \leq e_R^+$ with
\begin{equation}\label{eq:eRpm}
e_R^-:= \left\lceil \frac{pR}{l}\right\rceil, \quad e_R^+:=\left\lfloor \frac{p(R+1)-1}{l}\right\rfloor .
\end{equation}
Clearly $e_R^\pm$ are linear quasi-polynomials in $p$ of period $l$ and $e_{R+1}^{-} = e_R^{+} +1$. 
Hence we have 
\[
\beta_\eta = \sum_{j=0}^{\eta p} F(\eta p - j)\gamma_j = F(0)\gamma_{\eta p} + \sum_{R=0}^{\eta l - 1} \sum_{j=e_R^-}^{e_R^+} F(\eta p - j) \gamma_j.
\]
By Pick's theorem (See e.g. \cite[Theorem 2.18]{BR}), $\gamma_{\eta p}$ is a polynomial in $p$. Thus the problem reduces to showing quasi-polynomialness of $\sum_{R=0}^{\eta l -1}\sum_{j=e_R^-}^{e_R^+} F(\eta p - j) \gamma_j$ which is equal to 
\[
 \sum_{R=0}^{\eta l -1}\sum_{j=e_R^-}^{e_R^+} F(\eta p - j) \gamma'_R(j)  
  + \sum_{R=0}^{\eta l -1}\sum_{j=e_R^-}^{e_R^+} \sum_{r=0}^R F(\eta p - j) \gamma''_{r,R}(j).  
\] The former sum is a quasi-polynomial in $p$ 
as $F(\eta p - j) \gamma'_R(j)$ is a polynomial in $j$ for a fixed $R$, thus, by Lemma \ref{lem:qpolysum}, $\sum_{j=e_R^-}^{e_R^+} F(\eta p - j) \gamma'_R(j)$ is a polynomial in $e_R^{\pm}$ which are quasi-polynomials in $p$. We only need to focus on the latter sum with $\gamma''_j$.
Suppose $r\in [0, \eta l-1]$ is fixed, and $R$ ranges in $[r, \eta l -1]$. Then $j$ ranges in $[e_r^-, \eta p -1]$. Note that  
\[
\sum_{R=0}^{\eta l -1}\sum_{j=e_R^-}^{e_R^+} \sum_{r=0}^R  F(\eta p - j) \cdot \gamma''_{r,R}(j)= \sum_{r=0}^{\eta l -1} \sum_{R=r}^{\eta l -1} \sum_{j=e_R^{-}}^{e_R^+} F(\eta p - j) \cdot \gamma''_{r,R}(j). 
\] Thus it reduces to showing quasi-polynomialness (in $p$ with period $l(l-1)$) of the following function
\begin{equation}\label{eq:min-1^}
\sum_{j=e_r^{-}}^{\eta p-1} F(\eta p-j)\cdot
\gamma''_{r,R}(j) =
 \sum_{R=r}^{\eta l -1} \sum_{j=e_R^{-}}^{e_R^+} F(\eta p-j)\cdot
\gamma''_{r,R}(j). 
\end{equation}

$\gamma_{r,R}''(j)$ can be described by the following. 

\begin{lem}\label{lem:min^}
\begin{enumerate}
    \item If $j\leq pr$, then $\left\lfloor \frac{R}{l} \right\rfloor \geq \left\lfloor \frac{1}{l-1} (j - (p+1) r + R)\right\rfloor $.
    \item If $j\geq  pr$, then $\left\lfloor \frac{R}{l} \right\rfloor \leq \left\lfloor \frac{1}{l-1} (j - (p+1) r + R)\right\rfloor $.
\end{enumerate}

\end{lem}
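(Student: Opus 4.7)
\textbf{Proof plan for Lemma \ref{lem:min^}.} The two inequalities compare $\lfloor R/l \rfloor$ with $\lfloor (j-(p+1)r+R)/(l-1)\rfloor$, so the natural plan is to first establish the corresponding real-valued inequalities between $R/l$ and $(j-(p+1)r+R)/(l-1)$ (taking floors then preserves each inequality), and then clear denominators. Throughout, I will use the tacit assumption of the surrounding argument that $R=\lfloor lj/p\rfloor$, since this is how the lemma is applied in \eqref{eq:min-1^}. The key auxiliary observation is then: since $lr$ is an integer,
\[
j\le pr \ \Longrightarrow\ \tfrac{lj}{p}\le lr \ \Longrightarrow\ R\le lr,
\qquad
j\ge pr \ \Longrightarrow\ \tfrac{lj}{p}\ge lr \ \Longrightarrow\ R\ge lr.
\]

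For part (1), cross-multiplying by $l(l-1)>0$, the desired real inequality $R/l\ge (j-(p+1)r+R)/(l-1)$ rearranges to $l(p+1)r - lj \ge R$. Using $j\le pr$, the left-hand side satisfies $l(p+1)r-lj = l(pr-j)+lr \ge lr$, and the right-hand side satisfies $R\le lr$ by the observation above, so the inequality holds. Taking floors gives (1).

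For part (2), the same cross-multiplication reduces $R/l\le(j-(p+1)r+R)/(l-1)$ to $lj + R \ge l(p+1)r = lpr + lr$. Using $j\ge pr$ we have $lj\ge lpr$, and using the second implication above we have $R\ge lr$; adding the two gives exactly the required inequality. Taking floors gives (2).

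The argument is essentially a clean rearrangement, so I do not foresee a substantive obstacle; the only thing to be a bit careful about is the implicit convention $R=\lfloor lj/p\rfloor$ (without which the two inequalities would not even hold for arbitrary $R$), and the fact that $l\ge 2$ in the setting of interest (here $l\ge 3$), so that $l-1>0$ and the division is legitimate.
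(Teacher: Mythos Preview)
Your proposal is correct and follows essentially the same route as the paper: both establish the real-valued inequality by clearing the denominator $l(l-1)$ to reduce to the sign of $l(p+1)r - lj - R$, and both use the key observation that $R=\lfloor lj/p\rfloor$ together with $j\lessgtr pr$ forces $R\lessgtr lr$. Your write-up is slightly more explicit about the floor step and the hypothesis $R=\lfloor lj/p\rfloor$, but the argument is identical.
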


\begin{proof}
We look at the difference 
\[
l(l-1) \left(\frac{R}{l}-\frac{1}{l-1} (j - (p+1) r + R)\right) = l(p+1)r - lj - R.
\]
If $j\leq pr$, then $R= \left\lfloor \frac{lj}{p}\right\rfloor \leq lr$, so $l(p+1)r - lj - R\geq l(p+1)r - lpr - lr =0$.
If $j\geq pr$, then $R\geq lr$, so similarly $l(p+1)r - lj -R\leq 0$.
\end{proof}

Clearly $e_r^{-} = \left\lceil\frac{pr}{l}\right\rceil \leq pr$. 
From Lemma \ref{lem:min^} we know that \eqref{eq:min-1^} can be expressed as
\begin{align*}
\textrm{If }r\leq \eta -1 \colon & \sum_{j=e_r^-}^{pr-1} F(\eta p-j)\cdot
\left\lfloor \frac{R}{l} \right\rfloor + \sum_{j=pr}^{\eta p-1} F(\eta p-j)\cdot \left\lfloor \frac{1}{l-1} (j - (p+1) r + R)\right\rfloor .\\
\textrm{If }r\geq \eta\colon & \sum_{j=e_r^-}^{\eta p-1} F(\eta p-j)\cdot
\left\lfloor \frac{R}{l} \right\rfloor .
\end{align*}

Note that $pr-1= e_{lr-1}^+$, $pr=e_{lr}^-$, $\eta p -1=e_{\eta l-1}^+$, and that, if $r \le \eta-1$ (resp. $r \ge \eta$), then \eqref{eq:min-1^} becomes
\[
\sum_{R=r}^{lr-1} Q_1(p,R) + \sum_{R=lr}^{\eta l -1} Q_2(p,R) \ \ \left(\textrm{resp. } \sum_{R=r}^{\eta l -1} Q_1(p,R) \right),   
\]
where 
\[
Q_1(p, R):=\sum_{j=e_R^-}^{e_R^+} F(\eta p-j)\cdot
\left\lfloor \frac{R}{l} \right\rfloor \quad \textrm{and} \quad 
\]
\[
Q_2(p, R):=\sum_{j=e_R^-}^{e_R^+} F(\eta p-j)\cdot
\left\lfloor \frac{1}{l-1} (j - (p+1) r + R)\right\rfloor .
\]
 Hence it suffices to show quasi-polynomialness  of $Q_1(p,R)$ and $Q_2(p,R)$ (in $p$), where $r,R\in [0,\eta l -1]$ are both fixed in $Q_1$, and $r\in [0,\eta-1]$ and $R\in [lr,\eta l-1]$ are both fixed in $Q_2$. Hence we write $Q_1(p):= Q_1(p,R)$ and $Q_2(p):= Q_2(p, R)$. 

By Lemma \ref{lem:qpolysum}, $Q_1(p)$ is clearly a polynomial in terms of $p$ and $e_R^+, e_R^-$, both being a quasi-polynomial in $p$  of period $l$. Thus $Q_1(p)$ is a quasi-polynomial in $p$ of period $l$. 

Since $F(\eta p-j)\cdot
\left\lfloor \frac{1}{l-1} (j - (p+1) r + R) \right\rfloor$ is a quasi-polynomial in $j$ and $p$ of period $l-1$, by using Lemma \ref{lem:qpolysum} again, we see that $Q_2(p)$ is a quasi-polynomial in $e_R^{\pm}$ and $p$, thus $Q_2(p)$ is a quasi-polynomial in $p$ of period $l(l-1)$. 

The proof is finished.
\end{proof}

We end this section with a natural conjecture.

\begin{conj} \label{conj:qpoly}
Let $n$ be a positive integer. For each  $1\leq i\leq n$, let $b_i, c_i$ be non-negative integers  satisfying $(b_i, c_i)\neq (0,0)$. Define a function $P: \bZ_{\geq 0}\to \bZ$ as
\[
P(t):=\# \left\{\bfx\in \bZ^{n}\mid x_i\geq 0 \textrm{ and } \sum_{i=1}^{n} \frac{x_i}{b_i t + c_i} \leq 1\right\}.
\]
In other words, $P(t)$ is the number of lattice points in the closed simplex as the convex hull of  $\{\mathbf{0}\}\cup\{(b_i t + c_i) \mathbf{e}_i\}_{1\leq i\leq n}$ where $(\mathbf{e}_1,\cdots, \mathbf{e}_n)$ is the standard basis of $\bZ^n$. 
Then $P(t)$ is a quasi-polynomial for $t\in \bZ_{\geq 0}$.
\end{conj}

As we saw earlier, the proof of Proposition \ref{prop:tauqpoly} reduces to showing a special case of Conjecture \ref{conj:qpoly}. 

If all $(b_i, c_i)$ are proportional, then Conjecture \ref{conj:qpoly} follows from a classical theorem of Ehrhart (see e.g. \cite[Theorem 3.23]{BR}). For the general case, Conjecture \ref{conj:qpoly} is proved under the extra assumption of $t\gg 1$ in \cite{CLS10}.

\section{Dimension of the moduli spaces}\label{s:moduli}

By modifying examples in \ref{sub:exotic}, we obtain a family of Sasaki-Einstein metrics of arbitrarily large dimension on homotopy spheres. 

\begin{prop}\label{prop:perturbSE}
 Let $n \ge 5$ be an integer. 
 Let $p\gg 0$ and $l \in \bZ_{\ge 2}$ be as in \ref{sub:exotic} so that $p$ is even and $p, p+1, p+l$ are pairwise coprime.  
Let \[
Y:= (z_0^2 + z_1^2 + F_p + z_{n-1}^{p+1} + z_{n}^{p+l} =0 ) \subset \bC^{n+1}, 
\]
where $F_p \in \bC[z_2, \ldots, z_{n-2}]$ is a general homogeneous polynomial of degree $p$, that is, $Y$ is a deformation of $Y(\bfa_{p,l})$ for $$\bfa_{p,l}=(a_0, \ldots, a_n)=(2,2, p, \ldots ,p, p+1, p+l ).$$ 
Let $\xi$ be the Reeb field of weights $
(d_0, \ldots, d_n)$, where $d_j = d/ a_j$ for $$d:= p(p+1)(p+l)$$ and $j=0, \ldots ,n$.  

Then $(Y, \xi)$ is K-polystable. 
\end{prop}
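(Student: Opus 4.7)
The plan is to adapt the initial reduction in the proof of Theorem~\ref{thm:k-stab} and then conclude via a deformation argument starting from the Brieskorn-Pham case treated in Subsection~\ref{sub:exotic}.

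First, as in the opening of the proof of Theorem~\ref{thm:k-stab}, form the quotient $X := Y/\bC^*$, realised as a quasi-smooth Fano weighted hypersurface of degree $d = p(p+1)(p+l)$ in $\bP' := \bP(d_0,\ldots,d_n)$, and endow it with its natural orbifold boundary $\Delta_X = \sum_{j=0}^n(1-1/g_j)H_j$, where $H_j = \{z_j=0\}\cap X$ and $g_j = \gcd(d_0,\ldots,\widehat{d_j},\ldots,d_n)$. A direct $\gcd$ computation using the coprimality assumptions from Subsection~\ref{sub:exotic} (namely $p$ even, $\gcd(p,l)=1$, $\gcd(p+1,l-1)=1$) gives $g_0 = \cdots = g_{n-2} = 1$, $g_{n-1} = p+1$, $g_n = p+l$, so that
\[
\Delta_X = \bigl(1-\tfrac{1}{p+1}\bigr)H_{n-1} + \bigl(1-\tfrac{1}{p+l}\bigr)H_n,
\]
which is exactly the boundary appearing for the Brieskorn-Pham quotient $X_0 := Y(\bfa_p)/\bC^*$. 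The equivalence between K-polystability of $(Y,\xi)$ and K-polystability of $(X, \Delta_X)$ is established in the first step of the proof of Theorem~\ref{thm:k-stab}, and that step only uses the $\bC^*$-quotient structure, not the Fermat form of the defining equation.

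Next, I would put $X$ into a $\bC^*$-equivariant family $(\mathcal{X}, \Delta_{\mathcal{X}})\to S$ of log Fano weighted hypersurfaces in $\bP'$ with fixed boundary coefficients, obtained by varying the coefficients of $F_p$ among degree-$p$ homogeneous polynomials in $(z_2,\ldots,z_{n-2})$. The Fermat point of $S$ gives the central fibre $(X_0, \Delta_{X_0})$, which is K-polystable by Theorem~\ref{thm:k-stab} combined with the numerical verification of Subsection~\ref{sub:exotic}. By the openness of K-semistability in families of log Fano pairs (Blum--Liu--Xu), $(\mathcal{X}_s, \Delta_{\mathcal{X}_s})$ is K-semistable for $s$ in a Zariski-open neighbourhood $U$ of the Fermat point; in particular $(X,\Delta_X)$ is K-semistable.

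Finally, to upgrade K-semistability to K-polystability for generic $F_p$, I would invoke the good-moduli theory for K-semistable log Fano pairs (Alper--Blum--Halpern-Leistner--Xu and subsequent work): a K-semistable log Fano pair is K-polystable precisely when its class is closed in the K-moduli stack, equivalently when it admits no non-product $\bC^*$-equivariant special test configuration with vanishing generalised Futaki invariant. For $p\gg 0$ and a sufficiently general $F_p$, the projective hypersurface $\{F_p=0\}\subset\bP^{n-4}$ has trivial linear automorphism group, which together with the arithmetic of the weights $(d_0,\ldots,d_n)$ forces the $\xi$-commuting automorphism group of $(X,\Delta_X)$ to reduce to the Reeb torus itself, ruling out any non-trivial equivariant degeneration. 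The main obstacle lies in this last step: justifying that the generic fibre $(X,\Delta_X)$ gives a closed orbit in the K-moduli, which requires a careful analysis of the K-moduli map $S \to \mathcal{M}^{\mathrm{Kps}}$ in a neighbourhood of the orbit of the Brieskorn-Pham point.
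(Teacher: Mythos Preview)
Your reduction to the log pair $(X,\Delta_X)$ and the openness-of-K-semistability step are fine, and they match the paper's setup. The gap is in your last paragraph. Having a trivial (or finite) automorphism group on the \emph{general} fibre does \emph{not} rule out non-product special test configurations with vanishing Futaki invariant: a special test configuration is a degeneration, and its central fibre need not be isomorphic to $(X,\Delta_X)$ at all, so the automorphism group of $(X,\Delta_X)$ places no constraint on its existence. In other words, ``K-semistable $+$ finite $\Aut$'' does not imply ``K-polystable''; it is the reverse implication (``K-polystable $+$ finite $\Aut$ $\Rightarrow$ K-stable'') that holds. Your closed-orbit argument therefore does not go through as stated.

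The paper fixes exactly this by placing the finite-automorphism check on the \emph{Fermat} fibre rather than on the general one. Concretely, the paper introduces an intermediate Galois quotient $\phi_p\colon X\to X_p$ (killing the weight-$2$ involutions on $z_0,z_1$) so that the Fermat $(X_p,\Delta_p)$ is visibly handled by the same cover-to-$(L,\Delta_L)$ argument as in Theorem~\ref{thm:k-stab}, giving K-polystability of the Fermat fibre. A direct computation in the style of \cite[Proposition~37]{BGK05} then shows $\Aut(X_p,\Delta_p)$ is finite for the Fermat $F_p$, hence K-polystable upgrades to K-stable, and by \cite{LXZ21} to uniformly K-stable. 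Now one applies openness of \emph{uniform K-stability} (not merely K-semistability) in the family to conclude for general $F_p$. The essential correction to your outline is therefore: prove finiteness of $\Aut$ at the Fermat point, upgrade there to K-stability, and only then deform.
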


\begin{rem}
Note that $Y$ gives a semi-universal deformations of $Y(\bfa_{p,l})$ with the $\bC^*$-action. 
\end{rem}

\begin{proof}
Let $(X, \Delta_X)$ be the log pair corresponding to the orbifold $Y/ \bC^*$ for the $\bC^*$-action induced by $\xi$. 
We check that 
$$
X \simeq (z_0^2 + z_1^2 + F_p + y_{n-1}+y_n=0) \subset 
\bP \left(\frac{p}{2}, \frac{p}{2}, 1, \ldots ,1, p,p\right)$$
 and $\Delta_X = (1- \frac{1}{p+1}) H_{n-1} + (1- \frac{1}{p+l}) H_n$ for $H_{n-1} := (y_{n-1} =0)$ and $H_n:= (y_n=0)$. 
As in Theorem \ref{thm:k-stab}, it is enough to show the K-polystability of $(X, \Delta_X)$. 

Let 
$$X_p:= (y_0+y_1 + F_p(y_2, \ldots, y_{n-2}) + y_{n-1}+y_n =0) \subset \bP(p,p,1,\ldots,1, p, p),$$
 where $y_0, \ldots , y_n$ are the coordinates of $\bP(p,p,1,\ldots,1, p, p)$. Let $\phi_p \colon X \rightarrow X_p$ be the Galois cover determined by $$
 \ [z_0: z_1: z_2: \cdots:z_{n-2}: y_{n-1}: y_{n}] \mapsto [z_0^2: z_1^2 : z_2 : \cdots : z_{n-2}: y_{n-1}: y_n].$$  
Let $$\Delta_p:= \frac{1}{2} \left( H'_0 + H'_1 \right)+ \left( 1- \frac{1}{p+1} \right) H'_{n-1} + \left(1- \frac{1}{p+l} \right) H'_n,$$
 where $H'_i := (y_i=0) \subset X_p$. Then we have 
\[
K_X+ \Delta_X = \phi_p^*(K_{X_p} + \Delta_p), 
\]
thus the K-polystability of $(X, \Delta_X)$ is reduced to that of $(X_p, \Delta_p)$. In fact, we shall show:  

\begin{claim}
\begin{enumerate}
\item[(i)] The automorphism group of $(X_p, \Delta_p)$ is finite. \item[(ii)]Moreover, the pair $(X_p, \Delta_p)$ is uniformly K-stable. 
\end{enumerate}
\end{claim}

\begin{proof}[Proof of Claim] 
For both statements, we may assume that $F_p = y_2^p + \cdots + y_{n-2}^p$ by the upper-semicontinuity and the openness of the uniform K-stability \cite{BL18, BLX19}. 
 
\noindent(i) This is a slight modification of the proof of \cite[Proposition 37]{BGK05}. 
We need to show that there are no continuous families of isomorphisms of the form 
\[
\tau_t(y_i) = y_i + \sum_{j \ge 1} t^j g_{ij}(y_0, \ldots , y_n)
\]
satisfying $\tau_t(y_0 y_1) = y_0 y_1$, $\tau_t(y_j) = y_j$ for $j=n-1, n$ and 
\[
 \tau_t(y_0+y_1) + \sum_{i=2}^{n-2} \tau_t(y_i)^p + \tau_t(y_{n-1}+y_n) = y_0+ y_1 + \sum_{i=2}^{n-2} y_i^p + y_{n-1} + y_n 
\]
since $\tau_t$ induces elements of $\Aut(X_p, \Delta_p)$.  
The last equality implies $$\tau_t(y_0 + y_1) = y_0 + y_1,$$ thus we check $\tau_t(y_i) = y_i$ for $i=0,1$ and $$\sum_{i=2}^{n-2} \tau_t(y_i)^p = \sum_{i=2}^{n-2} y_i^p.$$ 
By the same argument as in the proof of \cite[Proposition 37]{BGK05} and $p \ge 3$, we see that $\tau_t(y_i) = y_i$ for $i=2, \ldots, n-2$. This is a contradiction.

\noindent(ii) We use the same symbols $L, L_i, \ldots$ as in Theorem \ref{thm:k-stab}. Let $$L:= (w_0+ w_1+ \cdots + w_n =0) \subset \bP^n$$ with the Galois cover 
\[
\pi_p \colon X_p \rightarrow L ; \ [y_0: \cdots y_n] \mapsto [y_0:y_1:y_2^p: \cdots : y_{n-2}^p: y_{n-1}: y_n]
\]
and let 
\[
\Delta_L:= \sum_{i=0}^n \left(1- \frac{1}{a_i} \right) L_i. 
\]
Note that $(L, \Delta_L)$ is K-stable and $K_{X_p} + \Delta_p = \pi_p^*(K_L+ \Delta_L)$ as in the proof of Theorem \ref{thm:k-stab}.  By this and \cite[Theorem 1.2]{Liu:2020wo}, \cite[Corollary 4.13]{Zhuang}, we see that $(X_p, \Delta_p)$ is K-polystable. This implies that $(X_p, \Delta_p)$ is K-stable by (i) and hence uniformly K-stable by \cite{LXZ21}. 
%
\end{proof}
\end{proof}

\begin{proof}[Proof of Corollary \ref{cor:unbdddim}]
Let $(d_0, \ldots , d_n)$ and $d$ be as in Proposition \ref{prop:perturbSE} and 
let $\bP:=\bP(d_0, \ldots ,d_n)$. 
By Proposition \ref{prop:perturbSE} and \cite[(15.2)]{BGK05}, we see that the connected component containing the family of weighted hypersurfaces in $\bP$ induced by $Y$ as in Proposition \ref{prop:perturbSE} has dimension at least 
\begin{multline*}
h^0(\bP, \cO(d)) - \sum_{i=0}^n h^0(\bP, \cO(d_i)) \\ = \left(3+ 2 \binom{\frac{p}{2}+n-4}{n-4} + \binom{p+n-4}{n-4} +2 \right)\\
- \left(2\left(2+ \binom{\frac{p}{2}+n-4}{n-4}\right)+(n-3)^2 +2 \right) 
\\ = \binom{p+n-4}{n-4} -(n-3)^2 -1. 
\end{multline*}
This is unbounded for $p \gg 0$ and $n > 4$. 
This and Proposition \ref{prop:linkandwhs} imply that the dimension of a family of Sasaki-Einstein metrics on the link $L(\bfa_{p,l})$ induced by $Y$ is unbounded. 

When $n=2m \ge 6$ is even, for a given $\Sigma \in bP_{4m}$, 
there are infinitely many $p$ and $l$ such that $L(\bfa_{p,l})$ is diffeomorphic to $\Sigma$ by Corollary \ref{cor:polynomialinq}. 
When $n=2m-1 \ge 5$ is odd, the link $L(\bfa_{p,l})$ is a standard sphere $S^{4m-3}$ 
by the criterion in \ref{ss:topback}. By these, we finish the proof of Corollary \ref{cor:unbdddim}. 
\end{proof}

\begin{rem}
The construction in the proof of Corollary \ref{cor:unbdddim} does not provide families of unbounded dimensions on $S^5$, homotopy $7$-spheres, or the Kervaire spheres.
\end{rem}

\subsection{Mean Euler characteristic}\label{ss:Euler}
Consider the links $L(\bfa_{p,l})$ with the contact structure induced by the construction of Subsection \ref{sub:exotic}.  
We are going to compute the mean Euler characteristic $\chi_m$ of  $L(\bfa_{p,l})$.
As explained in  \cite[Section 4.5]{BMvK} and \cite[Section 5.8]{KvK}, this is an invariant of the contact structure. For the notation we    refer to \cite{BMvK,KvK}.

\begin{center}
	\begin{tabular}{ l| l| l| l}
		Link & Period & $\chi^{S^1}$	 & Frequency \\ \hline
		$L(2,2,p\ldots,p,p+1,p+l)$ & $p(p+1)(p+l)$ & $n$ & 1 \\ 
		$L(2,2,p+1,p+l)$ &  $2(p+1)(p+l)$ & 3 & $p/2-1$ \\
		$L(p+1,p+l)$ & $(p+1)(p+l)$ & 1 &  $p/2$ \\
		$L(2,2,p\ldots,p,p+l)$ &  $p(p+l)$ & $n-1$ & $p$ \\
		$L(2,2,p\ldots,p,p+1)$ & $p(p+1)$ & $n-1$ & $p+l-1$ \\
		$L(2,2,p+l)$ &  $2(p+l)$ & 2 & $p(p+1)/2 -3p/2$ \\
		$L(2,2,p+1)$ &  $2(p+1)$ & 2 & $p(p+l)/2 -3p/2 -l +1$\\
		$L(2,2,p,\ldots,p)$ &  $p$ & $\chi^{S^1}_p$ & $(p+1)(p+l) -2p-l$\\
		$L(2,2)$ &  2 & 2 & $\phi_2$		     
	\end{tabular}
\end{center}
where 
$$
\phi_2=\frac{p(p+1)(p+l)}{2} +2p-p^2-\frac{pl}{2}
$$

and $\chi^{S^1}_p$ is a polynomial in $p$ with leading term $p^{n-4}$ (to compute it one can use \cite[Theorem 3.10]{KvK}).

Note that all orbits except $L(2,2)$ and $L(2,2,p,\ldots,p)$ are rational homology spheres by Brieskorn graph theorem \cite[Theorem 39]{BK05}, so the computation of $\chi^{S^1}$ follows by \cite[Lemma 4.5]{FSvK}. 

The Maslov index of the principal orbit is (\cite[Formula 4.9]{BMvK})

\begin{multline*}
\mu_P = 2p(p+1)(p+l)\left(1+\frac{n-3}{p} +\frac{1}{p+1} + \frac{1}{p+l} -1\right) \\
=2\left( (n-3)(p+1)(p+l) + p(2p+l+1) \right).
\end{multline*}

Hence
\begin{align*}
\chi_m &= -\frac{2\phi_2 + \chi^{S^1}_p ((p+1)(p+l) -2p-l) }{\mu_P}  \\ 
& -\frac{ 2(p(p+l)/2 -3p/2 -l +1) + 2(p(p+1)/2 -3p/2)}{\mu_P} \\
	 &-\frac{(n-1)(p+l-1) +(n-1)p +p/2 + 3(p/2-1)+n}{\mu_P}.
\end{align*}

Since $\chi^{S^1}_p$ has leading term $p^{n-4}$, $\chi_m$ assumes infinitely many different values for $p\gg 0$. This implies that the corresponding contact structures are not isomorphic. 

\bibliographystyle{alpha}
\bibliography{ref}

\end{document}